\let\origsection=\section \def\section{\@ifstar{\origsection*}{\mysection}} 
\def\mysection{\@startsection{section}{1}\z@{.7\linespacing\@plus\linespacing}{.5\linespacing}{\normalfont\scshape\centering\S}}
\renewcommand{\PrintDOI}[1]{\doi{#1}}
\def\alabel{\upshape({\itshape \alph*\,})}
\let\polishlcross=\l
\def\l{\ifmmode\ell\else\polishlcross\fi}
\DeclareMathSymbol{\lsb@l}{\mathalpha}{letters}{`l}
\def\colond{\colon\,}
\def\tand{\ \text{and}\ }
\def\qqand{\qquad\text{and}\qquad}
\let\emptyset=\varnothing
\let\setminus=\smallsetminus
\def\moverlay{\mathpalette\mov@rlay}
\def\mov@rlay#1#2{\leavevmode\vtop{   \baselineskip\z@skip \lineskiplimit-\maxdimen
   \ialign{\hfil$\m@th#1##$\hfil\cr#2\crcr}}}
\newcommand{\charfusion}[3][\mathord]{
    #1{\ifx#1\mathop\vphantom{#2}\fi
        \mathpalette\mov@rlay{#2\cr#3}
      }
    \ifx#1\mathop\expandafter\displaylimits\fi}
\newcommand{\dcup}{\charfusion[\mathbin]{\cup}{\cdot}}
\newcommand{\PP}{\mathbb{P}}
\newtheorem{theorem}{Theorem}
\newtheorem{lemma}[theorem]{Lemma}
\newtheorem{remark}{Remark}
\newtheorem{prop}[theorem]{Proposition}
\newtheorem{claim}[theorem]{Claim}
\newtheorem{cor}[theorem]{Corollary}
\let\eps=\varepsilon
\let\theta=\vartheta
\let\rho=\varrho
\let\phi=\varphi
\def\NN{\mathbb N}
\def\PP{\mathbb P}
\def\cF{{\mathcal F}}
\def\cC{{\mathcal C}}
\def\cE{{\mathcal E}}
\begin{document}

\title{An exponential-type upper bound for Folkman numbers}

\author{Vojt\v{e}ch R\"{o}dl}
\address{Department of Mathematics and Computer Science, 
Emory University, Atlanta, USA}
\email{rodl@mathcs.emory.edu}

\author{Andrzej Ruci\'nski}
\address{A. Mickiewicz University, Department of Discrete Mathematics, Pozna\'n, Poland}
\email{rucinski@amu.edu.pl}

\author{Mathias Schacht}
\address{Fachbereich Mathematik, Universit\"at Hamburg, Hamburg, Germany}
\email{schacht@math.uni-hamburg.de}

\thanks{V.~R\"odl was supported  by NSF grants DMS 080070 and DMS-1102086. 
	A.~Ruci\'nski was supported by the Polish NSC grant~N201~604940 and the NSF grant~DMS-1102086
	and parts of the research 
	were performed during visits at Emory University (Atlanta) and at
	at the Institut Mittag-Leffler (Djursholm, Sweden).
	M.~Schacht was supported through the \emph{Heisenberg-Programme} of the DFG}

\keywords{Ramsey theory, Folkman's theorem, random graphs, container method}
\subjclass[2010]{05D10 (primary), 05C80 (secondary)}

\begin{abstract} For  given integers $k$ and $r$, the Folkman number $f(k;r)$ is the smallest number of vertices in a graph
$G$ which contains no clique on $k+1$ vertices, yet for every partition of its edges into $r$
parts, some part contains a clique of order $k$. The existence (finiteness)  of Folkman numbers was
established by Folkman (1970) for $r=2$ and by Ne\v set\v ril and R\"odl~(1976) for arbitrary $r$,
but these proofs led to very weak upper bounds on $f(k;r)$.

Recently, Conlon and Gowers and independently the authors obtained 
a doubly exponential bound on $f(k;2)$. Here, we establish a further improvement by showing
an upper bound on $f(k;r)$ which is exponential in a polynomial function of $k$ and $r$. This
is comparable to the known lower bound $2^{\Omega(rk)}$. 

Our proof relies on a  recent result of
Saxton and Thomason~(2015) (or, alternatively, on a recent result of Balogh, Morris, and Samotij (2015)) from
which we deduce a quantitative version of  Ramsey's theorem in random graphs.
\end{abstract}

\maketitle

\section{Introduction}\label{intro}
For two graphs, $G$ and $F$, and an integer $r\ge2$ we write $G\rightarrow (F)_r$ if every $r$-coloring of the edges of $G$
results in a monochromatic copy of $F$. By a copy we mean here a subgraph of $G$ isomorphic to $F$.
 Let $K_k$ stand for the complete graph on $k$ vertices and let $R(k;r)$ be the $r$-color Ramsey number, that is, the smallest integer  $n$ such that $K_n\rightarrow (K_k)_r$. As it is customary, we suppress $r=2$ and write $R(k):=R(k;2)$ as well as $G\rightarrow F$ for $G\rightarrow (F)_2$.

In 1967 Erd\H os and Hajnal \cite{EH} asked if for some  $l$,  $ k+1\le l< R(k)$, there exists a graph~$G$ such that $G\rightarrow K_k$ and $G\nsupset K_{l}$. Graham \cite{Gr} answered this question positively for $k=3$ and $l=6$ (with a graph on eight vertices), and P\'osa (unpublished) for $k=3$ and $l=5$.
Folkman \cite{Fo} proved, by an explicit construction,  that such a graph exists for every $k\ge3$ and $l=k+1$.
He also raised the question to extend his result for more than two colors, since his construction was bound 
to two colors.

For integers $k$ and $r$, a graph $G$ is called \emph{$(k;r)$-Folkman} if
$G\rightarrow (K_k)_r$ and $G\not\supset K_{k+1}$. We define the
\emph{$r$-color Folkman number} for $K_k$ by
$$
    f(k;r)=\min\{n\in\NN\colon\,\exists\;\; G\mbox{ such that } |V(G)|=n\mbox{ and $G$ is $(k;r)$-Folkman}\}\,.
$$
For $r=2$ we set $f(k):=f(k;2)$.
It follows from~\cite{Fo}  that $f(k)$ is well defined for every
integer $k$, i.e., $f(k)<\infty$. This was extended by Ne\v set\v ril and R\"odl~\cite{NR}, who showed that
$f(k;r)<\infty$ for
an arbitrary number of colors~$r$.

Already the determination of $f(3)$ is a difficult, open problem.
In 1975, Erd\H os \cite{E75} offered max(100 dollars, 300 Swiss francs) for a proof or disproof of $f(3)<10^{10}$.
For the history of improvements of this bound see \cite{dudek}, where  a computer assisted construction is given yielding $f(3)< 1000$.
For general $k$, the only previously known upper bounds on $f(k)$ come from the constructive proofs
in \cite{Fo} and \cite{NR}. However, these bounds are tower functions of height polynomial in $k$.
On the other hand, since $f(k)\ge R(k)$, it follows by the well known lower bound on the Ramsey number
that $f(k)\ge2^{k/2}$, which for $k=3$ was improved to $f(3)\ge19$ (see~\cite{RX}).

We prove  an upper  bound on $f(k;r)$ which is exponential in a polynomial of $k$ and~$r$. Set $R:=R(k;r)$ for the $r$-color Ramsey number for $K_k$. It is known that there exists some $c>0$ such that for every $r\geq 2$ and $k\geq 3$ we have
$$2^{crk}<R<r^{rk}\,.$$
The upper bound already appeared in the work of Skolem~\cite{Sko33}. The lower bound 
obtained from a random $r$-coloring of the complete graphs is of the form $r^{k/2}$. However, Lefmann~\cite{Lef87} noted that 
the simple inequality $R(k;s+t)\geq (R(k;s)-1)(R(k;t)-1)+1$
yields a lower bound of the form $2^{kr/4}$. Using iteratively random $3$-colorings in
this ``product-type'' construction yields a slightly better lower bound of the form $3^{rk/6}$.
 Our main result establishes an upper bound on the Folkman number $f(k;r)$ of similar order of magnitude.

\begin{theorem}\label{thm:folk-ST}
    For all integers $r\geq 2$ and  $k\ge3$,
    \[
        f(k;r)
        \leq
        k^{400k^4}R^{40k^2}
        \leq
        2^{c(k^4\log k+k^3r\log r)}\,.
    \]
    for some $c>0$ independent of $r$ and $k$.
\end{theorem}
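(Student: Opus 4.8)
The plan is to realize a $(k;r)$-Folkman graph as a suitable subgraph of a random graph $G(n,p)$, where $n$ and $p$ are chosen so that two competing properties hold simultaneously: on the one hand, $G(n,p)$ should be ``Ramsey'' in the sense $G(n,p)\to(K_k)_r$ even after we delete a few edges to destroy all copies of $K_{k+1}$; on the other hand, $n$ and $p$ must be small enough to yield the claimed bound on $f(k;r)$. The natural choice is $p$ of order roughly $n^{-1/(\binom{k}{2}-1)}$ (the $K_k$-threshold regime, up to polylog factors) so that the expected number of copies of $K_{k+1}$ is negligible compared to $n$, which lets us remove one edge from each $K_{k+1}$ while keeping most edges intact; but $p$ must still be large enough that a quantitative Ramsey statement holds. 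First I would state and prove the key intermediate result: a \emph{quantitative Ramsey theorem in random graphs}, asserting that if $p\ge C\,(\log n)^{?}\,n^{-1/m_2(K_k)}$ for an appropriate constant $C=C(k,r)$ and $n$ is large enough, then with high probability every $r$-coloring of $E(G(n,p))$ yields a monochromatic $K_k$, and moreover this is robust: it survives the deletion of $o(p\binom{n}{2})$ edges. This is where the Saxton--Thomason (or Balogh--Morris--Samotij) container method enters — one builds a small family of ``containers'' for the $K_k$-free subgraphs of $K_n$, and uses that the number of such containers is at most $2^{o(n^2 p)}$ (in fact controlled by the relevant supersaturation/degree parameters), so that a union bound over containers and over $r$-colorings defeats the probability that some coloring is monochromatic-$K_k$-free.

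The second ingredient is to make the constants explicit enough to track how $C(k,r)$ and the required $n$ depend on $k$ and $r$. Here the essential input is the Ramsey number $R=R(k;r)$: the container/supersaturation argument naturally produces bounds in which $R$ appears, because the ``reason'' a dense graph is not $r$-colorable without a monochromatic $K_k$ is ultimately a pigeonhole/Ramsey phenomenon on sets of size about $R$. I would organize the counting so that the probability bound is something like $\bigl(r^{\binom n2}\bigr)\cdot(\text{number of containers})\cdot(\text{prob. that }G(n,p)\subseteq\text{a fixed bad container})$, and then choose $n \approx R^{O(k^2)}\cdot k^{O(k^4)}$ and $p$ in the $K_k$-threshold window so that this product is $<1$. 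The exponents $40k^2$ on $R$ and $400k^4$ on $k$ in the statement should emerge from: (i) the $n^{-1/m_2(K_k)}$ scaling, where $m_2(K_k)=(\binom k2-1)/(k-1)$ is of order $k$, so powers get multiplied by $\Theta(k)$; (ii) the polylogarithmic and container-count overheads, which contribute the extra polynomial-in-$k$ factors in the exponent; and (iii) the deletion step for $K_{k+1}$'s, which needs $n^{k+1}p^{\binom{k+1}{2}} \ll n^2 p$, i.e.\ $p \ll n^{-(k-1)/(\binom{k+1}{2}-1)}$, compatible with the Ramsey window for the stated range of $p$.

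With both ingredients in hand, the proof concludes as follows. Fix $k\ge 3$, $r\ge 2$, set $n=k^{400k^4}R^{40k^2}$ (so that $n$ meets the ``large enough'' hypothesis of the quantitative Ramsey statement — this is exactly what forces the displayed bound), and pick $p$ in the admissible window. With high probability $G(n,p)$ has the robust Ramsey property and has at most $n/2$ copies of $K_{k+1}$ (indeed far fewer). Delete one edge from each copy of $K_{k+1}$; call the result $G'$. Then $G'\not\supseteq K_{k+1}$ by construction, and $G'\to(K_k)_r$ because we removed only $o(p\binom n2)$ edges and the Ramsey property was robust. Hence $G'$ is $(k;r)$-Folkman on $n$ vertices, giving $f(k;r)\le n$. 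The final numerical inequality $n\le 2^{c(k^4\log k + k^3 r\log r)}$ follows by taking logarithms and using $\log R \le rk\log r$ from the displayed bound $R<r^{rk}$. The main obstacle is step one — extracting from the container theorem a Ramsey statement in random graphs whose \emph{quantitative} dependence on $k$ and $r$ is good enough (in particular, avoiding any tower-type or even super-polynomial-in-exponent loss), since the ``off-the-shelf'' container and supersaturation lemmas are usually stated with unspecified constants; making the supersaturation count for $K_k$ and the bound on the number of containers fully effective, with the right power of $R$, is the technical heart of the argument.
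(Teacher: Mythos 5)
Your proposal is sound in its first two stages and identifies the right technical heart (making the container/Ramsey argument fully quantitative), but the way you \emph{combine} the two properties differs genuinely from the paper. You propose the classical deletion route: show that a.a.s.\ $G(n,p)$ is \emph{robustly} Ramsey (the property survives removal of $o(p\binom{n}{2})$ edges), show that a.a.s.\ there are few copies of $K_{k+1}$, then delete one edge per $K_{k+1}$ to get the Folkman graph $G'$. The paper instead never deletes anything: it lower-bounds $\PP(G(n,p)\not\supset K_{k+1})$ directly via the FKG inequality, getting $\PP(G(n,p)\not\supset K_{k+1})>\exp(-C^{\binom{k+1}{2}}n)$ (Lemma~\ref{fkg}), and it upper-bounds $\PP(G(n,p)\not\to(K_k)_r)\le\exp(-bp\binom n2)$ via containers (Lemma~\ref{thm:RR1}); the parameters are then tuned so the second quantity is smaller than the first, so the two events intersect with positive probability, and $G(n,p)$ itself is Folkman. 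Both routes are legitimate and give a bound of the same shape, since your constraint \(\#K_{k+1}=o(p\binom n2)\) and the paper's constraint~\eqref{final} are of comparable strength. The FKG route is a bit cleaner: it only needs the one-shot container bound $|\cC|^r(1-p)^{\binom n2/R^2}$, whereas your route requires the slightly stronger ``robust'' statement, which one obtains by replacing $(1-p)^{\binom n2/R^2}$ with a Chernoff lower-tail bound for the number of edges $G(n,p)$ places in $K_n\setminus\bigcup_iC_i$ (a fixed set of $\ge\binom n2/R^2$ edges), together with a union bound over container $r$-tuples only (since the adversarial deletion is absorbed into the Chernoff deviation, not counted separately). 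This is extra but routine work.

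One quantitative slip worth flagging: you assert that a.a.s.\ there are ``at most $n/2$ copies of $K_{k+1}$ (indeed far fewer).'' With $p=Cn^{-2/(k+1)}$, the expected number of $K_{k+1}$'s is $\binom{n}{k+1}p^{\binom{k+1}{2}}\approx\frac{C^{\binom{k+1}{2}}}{(k+1)!}\,n$, and in this argument $C$ must be taken rather large (the paper uses $C=2^{5\sqrt{\log n\log k}}R^{16}$), so the count is $\gg n$. What you actually need, and what is true under the stated hypothesis~\eqref{n} on $n$, is that this count is $o\bigl(p\binom n2\bigr)=o\bigl(Cn^{2k/(k+1)}\bigr)$; that is, $C^{\binom{k+1}{2}-1}\cdot R^2$ must be small compared to $n^{(k-1)/(k+1)}$, which is essentially the paper's condition~\eqref{final}. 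So the argument survives, but the ``$n/2$'' claim as written is false and should be replaced by the correct threshold $o(pn^2)$.
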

To prove Theorem~\ref{thm:folk-ST}, we consider a random graph $G(n,p)$, $p=Cn^{-\tfrac2{k+1}}$,
where $n=n(k,r)$ and  $C=C(n,k,r)$ and carefully estimate from below the probabilities
$\PP(G(n,p)\rightarrow (K_k)_r)$ and $\PP(G(n,p)\not\supset K_{k+1})$, so that their sum is
strictly greater than 1. The latter probability is easily bounded by the FKG inequality. However,
to set a bound on $\PP(G(n,p)\rightarrow (K_k)_r)$ we  rely on a recent general result of Saxton
and Thomason~\cite{ST}, elaborating on ideas of Nenadov and Steger~\cite{steger} (see Remark
\ref{NS}).

\begin{remark}\rm Instead of the Saxton-Thomason theorem, we could have used a concurrent
result of Balogh, Morris, and Samotij~\cite{BMS}, which, by using our method, yields only a
slightly worse upper bound on the Folkman numbers $f(k;r)$ than Theorem \ref{thm:folk-ST} 
(the $k^4$ in the exponent has to be replaced $k^6$).
\end{remark}

\begin{remark}\rm
In~\cite{F_double}, we combined ideas from~\cites{FRS,rr,rrs} and, for $r=2$, obtained
another proof of the Ramsey threshold theorem that yields a self-contained derivation of a
double-exponential bound for the two-color Folkman numbers~$f(k)$. Independently, a similar
double-exponential bound for $f(k;r)$, for $r\ge2$, was obtained by Conlon and Gowers \cite{CG} by
a different method.
\end{remark}

Motivated by the original question of Erd\H os and Hajnal, one can also define, for $r=2$, $k\ge3$,
and $k+1\le l\le R(k)$,  \emph{a relaxed Folkman number} as
$$f(k,l)=\min\{n\colond \text{there exists}\ G\mbox{ such that } |V(G)|=n\,,\ G\to K_k\,,\tand G\not\supset K_l\}.$$
 Note that $f(k,k+1)=f(k)$. As mentioned above, Graham \cite{Gr} showed $f(3,6)=8$,
 while Nenov \cite{Ne} and Piwakowski, Radziszowski and Urba\'nski \cite{PRU} determined that
 $f(3,5)=15$ (see also \cite{Seba}).
 Of course, the problem is easier when the difference $l-k$ is bigger. Our final result provides an
exponential bound of the form  $f(k,l)\le\exp(-ck)$, when $l$ is close to but bigger than $4k$
(the constant $c$ is proportional to the reciprocal of the difference between $l/k$ and 4).

\begin{theorem}\label{kl} For every $0<\alpha<\tfrac14$ there exists $k_0$ such that for $k$ and $l$ satisfying
 $k\ge k_0$ and $k\le\alpha l$ we have
 $f(k;l)\le 2^{4k/(1-4\alpha)}.$
\end{theorem}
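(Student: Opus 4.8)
The plan is to take a random graph $G=G(n,p)$ with $n=2^{4k/(1-4\alpha)}$ and $p=Cn^{-2/(k+1)}$ for a suitable constant $C=C(k)$, and show that with positive probability $G$ simultaneously satisfies $G\to K_k$ and $G\not\supset K_l$. The first condition is handled exactly as in the proof of Theorem~\ref{thm:folk-ST}: by the quantitative Ramsey-in-random-graphs statement deduced from Saxton--Thomason, once $n$ is at least a polynomial in $R=R(k;2)$ (which by $R<r^{rk}$ with $r=2$ is $2^{O(k)}$, comfortably below our $n$ since $1/(1-4\alpha)>1$) and $C$ is large enough in terms of $k$, we get $\PP(G(n,p)\to K_k)>\tfrac12$. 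So the real content is to choose the parameters so that, in addition, $\PP(G(n,p)\not\supset K_l)\ge\tfrac12$, i.e.\ that copies of the much larger clique $K_l$ are unlikely. Then $\PP(G\to K_k)+\PP(G\not\supset K_l)>1$ forces the existence of the desired graph on $n$ vertices.

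The heart of the argument is the clique-count bound. The expected number of copies of $K_l$ in $G(n,p)$ is $\binom{n}{l}p^{\binom l2}\le n^l p^{\binom l2}=n^l\cdot C^{\binom l2} n^{-l(l-1)/(k+1)}$. Writing this as $n^{l\left(1-\frac{l-1}{k+1}\right)}C^{\binom l2}$, I want the exponent of $n$ to be sufficiently negative that even after multiplying by $C^{\binom l2}$ the quantity is below $\tfrac12$; by Markov's inequality that gives $\PP(G\supset K_l)\le\tfrac12$. Using $k\le\alpha l$ we have $l\ge k/\alpha$, so $\frac{l-1}{k+1}\ge \frac{l-1}{l/\alpha+1}$, which is bounded below by a constant strictly bigger than $1$ once $l$ (equivalently $k$) is large, because $\alpha<\tfrac14<1$. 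Hence $1-\frac{l-1}{k+1}\le -\delta$ for some $\delta=\delta(\alpha)>0$, and the exponent of $n$ is at most $-\delta l$. It remains to check that $n^{-\delta l}C^{\binom l2}\le\tfrac12$. Taking logarithms, this is $\binom l2\log C\le \delta l\log n-\log 2$; since $\log n$ is proportional to $k\le\alpha l$, I need $\tfrac l2\log C\lesssim \delta\log n\sim \delta\cdot\frac{4k}{1-4\alpha}$, i.e.\ the constant $C$ must be bounded in terms of $k$ and $\alpha$ only — which it is, because $C$ from the Ramsey-in-random-graphs application is a function of $k$ alone. The delicate point is to confirm that the value of $n=2^{4k/(1-4\alpha)}$ is large enough to absorb both the polynomial-in-$R$ lower bound required for $G\to K_k$ \emph{and} the $C^{\binom l2}$ factor here; this is where the precise exponent $4k/(1-4\alpha)$, and the hypothesis $k\ge k_0(\alpha)$, are used — $k_0$ is chosen so that all the ``large $k$'' estimates above kick in and the two conflicting demands on $n$ are jointly met.

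The main obstacle I anticipate is this simultaneous calibration: the Ramsey property needs $n$ and $C$ large (and $C$ large inflates the clique count), while suppressing $K_l$ wants $n$ large relative to $C^{l}$. The trick is that the $K_l$-count has a genuine power saving $n^{-\delta l}$ in the exponent whenever $l/k>1/\alpha>4$, and $\delta$ can be taken close to $1-4\alpha$ times an absolute constant; so as long as $\log n$ exceeds roughly $4k/(1-4\alpha)$ the saving beats the loss. One then reads off that $n=2^{4k/(1-4\alpha)}$ is on the boundary of what works, giving exactly the claimed bound. A secondary technical point is that the Saxton--Thomason machinery, as invoked for Theorem~\ref{thm:folk-ST}, is stated for the clique $K_k$ and the precise polynomial-in-$R$ threshold; one simply quotes that threshold and checks $2^{4k/(1-4\alpha)}$ exceeds it, using $R\le 2^{O(k)}$ and $1/(1-4\alpha)>1$. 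No new ideas beyond Theorem~\ref{thm:folk-ST} are needed — only the observation that a larger forbidden clique $K_l$ is much easier to avoid, quantified by a first-moment bound, and that we can afford a much smaller $n$ than in Theorem~\ref{thm:folk-ST} precisely because $l$ is bounded away from $k$.
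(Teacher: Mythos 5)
Your proposal does not work as written, and the obstacle is fundamental rather than technical. You want to invoke the Saxton--Thomason-based Lemma~\ref{thm:RR1} to guarantee $\PP(G(n,p)\to K_k)>\tfrac12$ for $p=Cn^{-2/(k+1)}$. But that lemma requires
\[
n\ \ge\ k^{400k^4}\,R^{40k^2}\,,
\]
which is not ``polynomial in $R$'' in any useful sense: even dropping the $k^{400k^4}$ factor and using only $R\ge 2^{k/2}$ gives $n\ge 2^{20k^3}$. Your $n=2^{4k/(1-4\alpha)}$ is a mere single exponential in $k$ (for fixed $\alpha$), astronomically smaller, so the hypotheses of Lemma~\ref{thm:RR1} are violated and you cannot quote it. Moreover, even if one overlooked that, the constant $C$ produced by~\eqref{eq:RRconstants} is $2^{5\sqrt{\log n\log k}}R^{16}$, which for the admissible $n$ is roughly $k^{\Theta(k^2)}$; your own first-moment calculation needs $\tfrac{l-1}{2}\log C\lesssim\delta\log n\approx\tfrac{\delta\cdot 4k}{1-4\alpha}$, which with $l\gtrsim k/\alpha$ forces $\log C=O_\alpha(1)$, independent of $k$. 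So the two demands are irreconcilable within the container framework: the machinery that gives you the Ramsey property at the threshold $p=Cn^{-2/(k+1)}$ inflates both $n$ and $C$ far beyond what suppressing $K_l$ at $n=2^{O(k)}$ can tolerate.

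The paper's proof avoids the containers entirely for Theorem~\ref{kl}. It takes $p$ to be a \emph{constant}, $p=2n^{-(7+4\alpha)/(16k)}=2^{-(20\alpha+3)/(4(1-4\alpha))}$, far above the Ramsey threshold. At constant density, $G\to K_k$ follows from an elementary ``canonical sequence'' induction (Proposition~\ref{rhodee}), once one verifies via Chernoff that $G(n,p)$ is a.a.s.\ $(\rho,p-o(p))$-dense. And because $p$ is bounded away from $1$, the first moment $\binom{n}{l}p^{\binom l2}\le(\tfrac{en}{l}p^{(l-1)/2})^l$ tends to zero as soon as $\tfrac{l-1}{2}\ge\tfrac{\log n}{\log(1/p)}$, which the hypothesis $k\le\alpha l$ supplies. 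The key insight you are missing is that when the forbidden clique $K_l$ is comfortably larger than $K_k$ (here $l>4k$), one can afford a dense random graph, and then the Ramsey side needs no heavy machinery at all. Your instinct to use a first moment for $K_l$ is right, but it only pays off if $p$ is a constant (or at least far above $n^{-2/(k+1)}$); at the threshold the calculus does not close.
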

\noindent It would be interesting to decide if the true order  of the logarithm of $f(k,k+1)=f(k)$ is also
linear in $k$.

The paper is organized as follows. In the next section we prove our main result, Theorem~\ref{thm:folk-ST},  while Theorem~\ref{kl} is proved in Section~\ref{kl4}. Finally, a short Section \ref{disc} offers a brief
discussion of the analogous problem for hypergraphs.
Most logarithms in this paper are binary and are denoted by $\log$. Only  occasionally, when
citing a result from \cite{ST} (Theorem \ref{thm:ST0} in Section \ref{sec:ST-statement} below), we will  use the natural logarithms, denoted by $\ln$.

\subsection*{Acknowledgment} We are very grateful to both referees for their valuable remarks which have led to a better presentation of our results. We would also like to thank J\'ozsef Balogh, David Conlon, Andrzej Dudek, Hi{\d{\^e}}p H\`an, Wojtech
Samotij, Angelika Steger, and Andrew Thomason for their helpful comments and relevant information.
Finally, we are truly indebted to Troy Retter for his careful reading of the manuscript.

\section{Proof of Theorem \ref{thm:folk-ST}}\label{sec:ST-statement}

We will prove Theorem \ref{thm:folk-ST} by the probabilistic method. Let $G(n,p)$ be the binomial
random graph, where each of the $\binom n2$ possible edges is present, independently, with
probability $p$.  We are going to show that for every $n\ge k^{40k^4}R^{10k^2}$ and  a suitable
function $p=p(n)$, with positive probability, $G(n,p)$ has simultaneously two properties:
$G(n,p)\rightarrow (K_k)_r$ and $G(n,p)\not\supset K_{k+1}$. Of course, this will imply that there
exists an $(k;r)$-Folkman graph on $n$ vertices.
We  begin with a simple lower bound on $\PP(G(n,p)\not\supset K_{k+1})$.

\begin{lemma}\label{fkg}
For all $k,n\ge3$, and $C>0$, if $p=Cn^{-2/(k+1)}\le\tfrac12$ then
$$\PP(G(n,p)\not\supset K_{k+1})> \exp(-C^{\binom{k+1}2}n)\,.$$
\end{lemma}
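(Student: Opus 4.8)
The plan is to bound $\PP(G(n,p) \not\supset K_{k+1})$ from below by the FKG (Harris) inequality for decreasing events, and then to simplify with routine estimates. For each $(k+1)$-element set $S$ of vertices, let $A_S$ be the event that $S$ does \emph{not} span a copy of $K_{k+1}$ in $G(n,p)$. Each $A_S$ is a decreasing event in the product probability space of edge indicators, and $\{G(n,p)\not\supset K_{k+1}\}=\bigcap_S A_S$. Since a fixed $(k+1)$-set spans a clique exactly when all of its $\binom{k+1}{2}$ edges are present, $\PP(A_S)=1-p^{\binom{k+1}2}$. The FKG inequality says that the intersection of decreasing events has probability at least the product of the individual probabilities, so
\[
    \PP(G(n,p)\not\supset K_{k+1})\;\geq\;\Bigl(1-p^{\binom{k+1}2}\Bigr)^{\binom{n}{k+1}}\,.
\]

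The second step is just arithmetic. Because $\tfrac{2}{k+1}\binom{k+1}2=k$, the choice $p=Cn^{-2/(k+1)}$ gives $p^{\binom{k+1}2}=C^{\binom{k+1}2}n^{-k}$, and using $\binom{n}{k+1}\le n^{k+1}/(k+1)!$ we get
\[
    \binom{n}{k+1}p^{\binom{k+1}2}\;\leq\;\frac{C^{\binom{k+1}2}n}{(k+1)!}\,.
\]
Now I would apply the elementary inequality $1-x\geq\exp\bigl(-x/(1-x)\bigr)$, valid for $0\le x<1$. The hypothesis $p\le\tfrac12$ forces $x:=p^{\binom{k+1}2}\le 2^{-\binom{k+1}2}\le 2^{-6}$ since $k\ge3$, hence $1/(1-x)<2$, and therefore
\[
    \Bigl(1-p^{\binom{k+1}2}\Bigr)^{\binom{n}{k+1}}\;\geq\;\exp\!\left(-\frac{2\,C^{\binom{k+1}2}n}{(k+1)!}\right)\;>\;\exp\!\left(-C^{\binom{k+1}2}n\right)\,,
\]
where the last, strict inequality uses $(k+1)!\geq 24>2$ for $k\ge3$. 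Combining the two displays proves the lemma.

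The only point requiring a word of care is the degenerate regime $n\le k$, where there is no room for a $K_{k+1}$ at all: then the product above is empty and equals $1$, so the bound holds trivially; for $n\ge k+1$ the estimates above apply verbatim. I do not expect any genuine obstacle here — the statement is essentially a one-line application of Harris's inequality followed by bookkeeping. The only mild design choice is to be deliberately generous in the estimates (in particular to keep the full factor $1/(k+1)!$ coming from $\binom{n}{k+1}$) so that the clean closed form $\exp(-C^{\binom{k+1}2}n)$ drops out with a strict inequality, which is exactly what is needed when this bound is later added to the lower bound on $\PP(G(n,p)\to(K_k)_r)$.
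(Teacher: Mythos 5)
Your proposal is correct and follows essentially the same route as the paper: apply FKG/Harris to get $\PP(G(n,p)\not\supset K_{k+1})\ge\bigl(1-p^{\binom{k+1}2}\bigr)^{\binom{n}{k+1}}$, then finish with routine estimates (the paper uses $1-x\ge e^{-2x}$ for $0<x<\tfrac12$ together with $\binom{n}{k+1}<n^{k+1}/2$, while you use the slightly tighter $1-x\ge\exp(-x/(1-x))$ and keep the full $1/(k+1)!$, but the difference is purely cosmetic).
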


\begin{proof} By applying the FKG inequality (see, e.g., \cite{JLR}*{Theorem 2.12 and Corollary 2.13}),  we
obtain the bound
$$\PP(G(n,p)\not\supset K_{k+1})\ge \left(1-p^{\binom{k+1}2}\right)^{\binom n{k+1}}
    \ge
    \exp\left(-2C^{\binom{k+1}2} n^{-k}\tbinom n{k+1}\right)
    >
    \exp\left(-C^{\binom{k+1}2}n\right)\,,$$
where we also used the inequalities $\binom n{k+1}< n^{k+1}/2$ and  $1-x\ge e^{-2x}$ for $0<x<\tfrac12$. 
\end{proof}
The main ingredient of the proof of Theorem \ref{thm:folk-ST} traces back to a theorem from
\cite{rr} establishing  edge probability thresholds for Ramsey properties of $G(n,p)$. A special case of that result states
that for all integers $k\ge3$ and $r\ge2$ there exists a constant $C$ such that if $p=p(n)\ge
Cn^{-\tfrac2{k+1}}$ then $\lim_{n\to\infty}\PP(G(n,p)\rightarrow (K_k)_r)=1.$

Adapting an idea of
Nenadov and Steger~\cite{steger} (see Remark \ref{NS} for more on that), and based on a result of Saxton and Thomason~\cite{ST}, we obtain
the following quantitative version of the above random graph theorem. Recall that $R=R(k;r)$ denotes the
$r$-color Ramsey number and notice an easy lower bound
\begin{equation}\label{2r} R(k;r)>2r\end{equation}
 valid for all $r\ge2$ and $k\ge3$ (just consider a factorization of $K_{2r}$).
\begin{lemma}
\label{thm:RR1}
    For all integers $r\geq 2$, $k\ge3$, and
    \begin{equation}\label{n}
     n\geq k^{400k^4} R^{40k^2},
     \end{equation}
    the following holds. Set
    \begin{equation}\label{eq:RRconstants}
        b=\frac{1}{2R^{2}},\quad
        C=2^{5\sqrt{\log n\log k}}R^{16}\,,\quad
         \mbox{ and }\quad p=Cn^{-\frac{2}{k+1}}.
       \end{equation}
    Then
    \[
        \PP(G(n,p)\rightarrow (K_k)_r)\ge 1-\exp\left(-bp\tbinom{n}{2}\right)\,.
    \]
\end{lemma}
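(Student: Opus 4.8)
The plan is to prove the lemma by the container method: combine the Saxton--Thomason container theorem (the forthcoming Theorem~\ref{thm:ST0}) with Ramsey's theorem, in the spirit of Nenadov and Steger~\cite{steger}. Consider the $\binom k2$-uniform hypergraph $\mathcal H$ on vertex set $E(K_n)$ whose hyperedges are the edge sets of the copies of $K_k$ in $K_n$; then $e(\mathcal H)=\binom nk$, and the independent sets of $\mathcal H$ are exactly the $K_k$-free graphs on $[n]$. First I would check that $\mathcal H$ satisfies the co-degree hypotheses of the container theorem with container parameter $\tau$ of order $n^{-2/(k+1)}$ --- the value forced by $m_2(K_k)=\tfrac{k+1}2$ --- which is the routine clique estimate $\Delta_j(\mathcal H)=\Theta(n^{\,k-v_j})$, where $v_j$ is the least number of vertices spanned by $j$ edges. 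The container theorem then supplies a family $\mathcal C\subseteq 2^{E(K_n)}$ with: (a)~every $K_k$-free graph on $[n]$ lies in some $C\in\mathcal C$; (b)~every $C\in\mathcal C$ contains at most $\eps\binom nk$ copies of $K_k$, where $\eps$ may be prescribed of order $R^{-(k+1)}$ (up to a factor depending only on $k$); and (c)~$\log|\mathcal C|$ is of order $n^{2-2/(k+1)}\log n$, up to a factor that is at most polynomial in $k$ times $\log R$. Carrying the constants from Theorem~\ref{thm:ST0} through (a)--(c) is where the hypotheses $n\ge k^{400k^4}R^{40k^2}$ and $C=2^{5\sqrt{\log n\log k}}R^{16}$ enter, and is the source of the $k^4$ in the exponent (and of the worse $k^6$ if \cite{BMS} is used instead).

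The combinatorial core is the deterministic statement: if $G$ is a graph on $[n]$ with $G\not\to(K_k)_r$, then one can choose $C_1,\dots,C_r\in\mathcal C$ with $E(G)\subseteq U:=C_1\cup\dots\cup C_r$ and $|E(K_n)\setminus U|\ge R^{-2}\tbinom n2$. The inclusion is immediate: fix an $r$-colouring of $E(G)$ with no monochromatic $K_k$; each colour class is $K_k$-free, hence is contained in some $C_i$ by~(a). For the lower bound on $|E(K_n)\setminus U|$, suppose it were smaller than $R^{-2}\binom n2$. Viewing the missing edges as a graph $\overline U$, a union bound over its edges gives at least $\binom nR-|E(\overline U)|\binom{n-2}{R-2}>\tfrac12\binom nR$ sets of $R$ vertices that span a clique in $U$. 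Colour the edges of each such $K_R$ by the least index $i$ with that edge in $C_i$; since $K_R\to(K_k)_r$, this colouring has a monochromatic $K_k$, whose edge set then lies entirely inside the corresponding $C_i$. As a fixed copy of $K_k$ extends to at most $\binom{n-k}{R-k}$ copies of $K_R$, double counting yields some $C_i$ with more than $\tfrac12\binom nR\big/\big(r\binom{n-k}{R-k}\big)=\binom nk\big/\big(2r\binom Rk\big)\ge R^{-(k+1)}\binom nk$ copies of $K_k$ (here one uses $R>2r$ from~\eqref{2r}), contradicting~(b).

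The rest is a union bound. With $b=\tfrac1{2R^2}$ we have $R^{-2}\binom n2=2b\binom n2$, so if $G(n,p)\not\to(K_k)_r$, then by the claim $E(G(n,p))\subseteq U$ for one of the at most $|\mathcal C|^r$ tuples $(C_1,\dots,C_r)$ with $|E(K_n)\setminus U|\ge 2b\binom n2$; for each such tuple $\PP\big(E(G(n,p))\subseteq U\big)=(1-p)^{|E(K_n)\setminus U|}\le\exp\!\big(-2bp\binom n2\big)$. Hence
$$
\PP\big(G(n,p)\not\to(K_k)_r\big)\ \le\ |\mathcal C|^r\exp\!\big(-2bp\tbinom n2\big),
$$
and it remains only to check $r\ln|\mathcal C|\le bp\binom n2$. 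Substituting $p=Cn^{-2/(k+1)}$ and using~(c), this reduces to an inequality of the form $C\ge(\log n)\cdot rR^2\log R$ times a polynomial in $k$; since $n\ge k^{400k^4}R^{40k^2}$ forces $\sqrt{\log n\log k}\ge 20k^2\log k$, the factor $2^{5\sqrt{\log n\log k}}$ already dominates $\log n$ and every polynomial in $k$, while $R^{16}$ absorbs $rR^2\log R$, so $C=2^{5\sqrt{\log n\log k}}R^{16}$ is comfortably large enough.

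The conceptual step --- the Turán/Ramsey double count in the second paragraph --- is short, and it is there that the value $b\sim R^{-2}$ and the exact role of the Ramsey number $R$ become transparent. The only genuine obstacle is the first paragraph: pinning down explicit container parameters from the Saxton--Thomason theorem, verifying the co-degree conditions for $\mathcal H$ at $\tau\sim n^{-2/(k+1)}$, and bounding $|\mathcal C|$ tightly enough that the near-optimal values of $n$, $C$ and $b$ in the statement suffice.
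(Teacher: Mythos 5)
Your proposal follows the same route as the paper: apply the Saxton--Thomason container theorem to the $\binom{k}{2}$-uniform hypergraph on $E(K_n)$ whose edges are the $K_k$-copies, prove via a Ramsey double count that for any $r$ containers $C_1,\dots,C_r$ the complement $E(K_n)\setminus\bigcup C_i$ has at least $R^{-2}\binom n2$ edges, and finish with a union bound absorbed by the choice of $b$, $C$ and $n$. The only organisational difference is that you fuse the paper's Proposition~\ref{Ramsey} and Claim~\ref{e(X)} into a single direct double count, which is fine; your description of $\log|\mathcal C|$ as ``of order $n^{2-2/(k+1)}\log n$ up to polynomial-in-$k$ and $\log R$ factors'' glosses over the $n$-dependent prefactor $C_0=2^{4\sqrt{\log n}}R^{10/k}$ built into the paper's $\tau$ (this is precisely why $C$ itself must grow with $n$), but the final verification you sketch is compatible with the paper's and the argument is sound.
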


We devote the next two subsections to the proof of Lemma \ref{thm:RR1}. Now, we deduce 
Theorem~\ref{thm:folk-ST} from Lemmas~\ref{fkg} and~\ref{thm:RR1}.

\begin{proof}[Proof of Theorem~\ref{thm:folk-ST}]

For given $r$ and $k$, let $n$ be as in (\ref{n}), and let $b,  C$, and $p$ be as in
\eqref{eq:RRconstants}. Below we will show that these parameters satisfy not only the assumptions
of  Lemma~\ref{thm:RR1}, but also the assumption $p\le\tfrac12$ of Lemma~\ref{fkg}, as well as an
additional inequality
\begin{equation}\label{final}
 n\ge\left(3/b\right)^{\frac{k+1}{k-1}}C^{\binom{k+2}2}.
 \end{equation}
With these two inequalities at hand, we may quickly finish the proof of Theorem~\ref{thm:folk-ST}.
Indeed,~\eqref{final} implies that

\begin{equation}\label{bp}
bp\binom n2\ge\frac13bpn^2=(b/3)Cn^{1+\frac{k-1}{k+1}}\overset{\eqref{final}}{\ge}
C^{\binom{k+1}2}n
\end{equation}
which, by Lemma~\ref{fkg}, implies in turn that
$$\PP(G(n,p)\not\supset K_{k+1})> \exp\left(-bp\tbinom n2\right).$$
Since, by Lemma~\ref{thm:RR1},
$$\PP(G(n,p)\rightarrow (K_k)_r )\ge 1-\exp\left(-bp\tbinom n2\right),$$
 we conclude that
 $$\PP(G(n,p)\rightarrow (K_k)_r \mbox{ and }G(n,p)\not\supset K_{k+1})>0.$$
Thus,  there exists a $(k;r)$-Folkman graph on $n$ vertices, and thus, $f(k)\le
k^{400k^4}R^{40k^2}$.

 It remains to show that
  $p\le\frac12$ and that (\ref{final}) holds.
  The first inequality is equivalent to
 \begin{equation}\label{p12}
 n\ge (2C)^{\frac{k+1}2}.
 \end{equation}
  We will now show that this inequality is a consequence of (\ref{final})
 and then establish (\ref{final}) itself.
  Since $C>2$ and $3/b\overset{\eqref{eq:RRconstants}}{=}6R^2\ge1$, we infer that
 $$\left(3/b\right)^{\frac{k+1}{k-1}}C^{\binom{k+2}2}\ge C^{\binom{k+2}2}\ge (2C)^{\frac{k+1}2},$$
and hence, (\ref{p12})  indeed follows from (\ref{final}).

  Finally, we establish (\ref{final}). In doing so we will use again the identity
  $3/b\overset{\eqref{eq:RRconstants}}{=}6R^2$, as well as the inequalities $36\le C$,
  which follows from (\ref{n}) and \eqref{eq:RRconstants},
  $\tbinom{k+2}2\le k^2+1\le 2k^2-1$, and $\tfrac{k+1}{k-1}\le2$, valid for all $k\ge3$.
  The R-H-S of (\ref{final}) can be bounded from above by
 $$(6R^2)^{\frac{k+1}{k-1}}C^{\binom{k+2}2}\le36R^4C^{\binom{k+2}2}\le R^4C^{k^2+2}\le  2^{10k^2\sqrt{\log n\log k}}R^{20k^2}.$$
 Hence,   it suffices to show that
 \begin{equation}\label{strong}
 n\ge 2^{10k^2\sqrt{\log n\log k}}R^{20k^2}.
 \end{equation}
 Observe that, by (\ref{n}), $\tfrac12\log n\ge 20k^2\log R$, and thus, it remains to check that
 $$\frac12\log n\ge 10k^2\sqrt{\log n\log k},$$
 or equivalently that
 $$\log n\ge 400k^4\log k.$$
 This, however, follows trivially from (\ref{n}).
\end{proof}

\subsection{The proof of Lemma \ref{thm:RR1} -- preparations}\label{BMS}

In this and the next subsection we present a  proof of Lemma~\ref{thm:RR1}, which is inspired by
the work of Nenadov and Steger~\cite{steger} and is based on a recent general result of Saxton and
Thomason~\cite{ST}  on the distribution of independent sets in hypergraphs.  For a hypergraph $H$,
a subset $I\subseteq V(H)$ is \emph{independent} if the subhypergraph~$H[I]$ induced by $I$ in $H$
has no edges.

For an $h$-graph $H$, the degree $d(J)$ of a set $J\subset V(H)$ is the number of edges of $H$
containing $J$. (Since in our paper letter $r$ is reserved for the number of colors, we will use
$h$ for hypergraph uniformity.) We will write $d(v)$ for $d(\{v\})$, the ordinary vertex degree. We
further define, for a vertex $v\in V(H)$ and $j=2,\dots, h$, the maximum $j$-degree of $v$ as
$$d_j(v)=\max\left\{d(J): v\in J\subset\tbinom{V(H)}j\right\}.$$

Finally, the co-degree function of $H$ with a formal variable $\tau$ is defined in \cite{ST} as
\begin{equation}\label{defdel}
\delta(H,\tau)=\frac{2^{\binom h2-1}}{nd}\sum_{j=2}^h\frac{\sum_vd_j(v)}{2^{\binom{j-1}2}\tau^{j-1}},
\end{equation}
where the inner sum is taken over all vertices  $v\in V(H)$ and $d$ is the average vertex degree in $H$, that is, $d=\tfrac1n\sum_vd(v)$.

Theorem \ref{thm:ST0} below is an abridged version of~\cite{ST}*{Corollary~3.6}, where we
suppress  part of conclusion~\ref{thm:STa} (about the sets $T_i$), as well as the ``Moreover'' part therein,
since we do not use this additional information here. 
 In part~\ref{thm:STc} of the theorem below, for convenience, we switch from $\ln$ to $\log$, but only on the R-H-S of  the upper bound on $\ln|\cC|$.
\begin{theorem}[Saxton \& Thomason, \cite{ST}]\label{thm:ST0}
Let $H$ be an $h$-graph on vertex set $[n]$ and let $\eps$ and $\tau$ be two real numbers such that $0<\eps<1/2$,
$$ \tau\le1/(144(h!)^2h) \qqand \delta(H,\tau)\le\eps/(12(h!)).$$ Then there exists a collection  $\cC$ of subsets of $[n]$ such that the following three properties hold.
\begin{enumerate}[label=\alabel]
\item\label{thm:STa} For every independent set $I$ in $H$ there exists a set $C\in\cC$ such that $I\subset C$.
\item\label{thm:STb} For all $C\in{\cC}$, we have $e(H[C])\le \eps e(H)$.
\item\label{thm:STc} We have $\ln|\cC|\le c\log(1/\eps)\tau\log(1/\tau)n,$
where $c=800(h!)^3h$.
\end{enumerate}
\end{theorem}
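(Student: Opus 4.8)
The plan is to reconstruct the \emph{scythe algorithm} of Saxton and Thomason, whose output will be the promised family $\cC$. Everything rests on a \emph{one-step container lemma}: from the co-degree hypothesis on $H$ one manufactures a family $\cC_1$ of subsets of $[n]$ such that every independent set of $H$ lies in a member of $\cC_1$, every $C\in\cC_1$ spans only a fixed fraction of the edges, say $e(H[C])\le(1-\rho_h)e(H)$ with $\rho_h>0$ depending only on $h$, and $\log|\cC_1|$ is bounded by a function of $h$ times $\tau\log(1/\tau)\,n$. Granting the one-step lemma, Theorem~\ref{thm:ST0} follows by iterating it $t=O_h(\log(1/\eps))$ times: apply it to $H$, then to $H[C]$ for each container $C$ produced, and so on; after $t$ rounds every surviving container has at most $\eps\,e(H)$ edges (property~\ref{thm:STb}), property~\ref{thm:STa} persists by transitivity of inclusion, and the per-round fingerprints concatenate into one of size $O_h(\log(1/\eps)\,\tau n)$, so that $\log|\cC|\le\log\binom{n}{\le O_h(\log(1/\eps)\tau n)}=O_h(\log(1/\eps)\,\tau\log(1/\tau)\,n)$, which is the bound claimed in~\ref{thm:STc}; the accumulation of $h!$-type factors along the way is what forces $\tau$ to be as small as $1/(144(h!)^2h)$ and $c$ as large as $800(h!)^3h$. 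One point to watch is that the hypotheses must survive restriction to a subcontainer: degrees and edge counts only drop, but the average degree $d$ appears in the denominator of $\delta(H,\tau)$, so one states the one-step lemma through $e(H)$, or re-normalizes $\tau$ slightly at each round and absorbs the loss into the final constant.

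For the one-step lemma I would fix an order on $[n]$ and, given an independent set $I$, run the following deterministic process. Keep an available set $A$ (initially $[n]$) and a fingerprint $S$ (initially empty). Repeatedly examine the vertex $v$ of $A$ of largest degree in $H[A]$ (ties by the fixed order). If $v\notin I$, delete $v$ from $A$; if $v\in I$, place $v$ in $S$, delete $v$, and --- so as to consume the edges through $v$ --- pass to the link $L=L_{H[A]}(v)$, an $(h-1)$-graph on a subset of $A$ in which $I$ induces an independent set, and recursively run the same routine on $L$ to obtain both an auxiliary fingerprint and a prescribed set of vertices of $L$ to delete from $A$; the recursion terminates at $1$-graphs, where an independent set is just one avoiding the singleton edges and nothing is deleted. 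Stop once the induced degrees have fallen below a threshold calibrated (through $d$, $\tau$ and $\eps$) so that the surviving set $C:=A$ spans few edges, and output $(C,S)$. The crucial structural fact is that $C$ is a function of $S$ alone: every branching in the process is resolved by the query ``is $v\in I$?'', whose answer is, by construction, affirmative exactly when $v$ has been written into $S$, so replaying the process with ``$v\in S$?'' substituted for ``$v\in I$?'' recovers the same $C$. Setting $\cC_1=\{C(S)\}$, property~\ref{thm:STa} holds since $I\subseteq A$ throughout.

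The two quantitative claims of the one-step lemma are the real work. For the edge count, at termination every vertex of $C$ has induced degree below the threshold, so $e(H[C])\le(\text{threshold})\cdot|C|/h$; choosing the threshold small enough to beat $e(H)$ by a constant factor while still being reachable requires estimating how much ``degree mass'' the recursive deletions in the successive links ($j=2,\dots,h$) eat up before the top-level degrees sink --- and that estimate is precisely the function $\delta(H,\tau)$, whose weights $2^{\binom{j-1}2}\tau^{j-1}$ are rigged to make the level-by-level accounting telescope; this is where the hypothesis $\delta(H,\tau)\le\eps/(12\,h!)$ is spent. For the fingerprint, a vertex enters $S$ at a moment when it carries the maximum current degree, and one argues, again level by level and using the co-degree hypothesis to bound the overhead of pruning the links, that the number of such vertices summed over all $h$ levels is $O_h(\tau n)$ up to logarithmic factors; hence $\log|\cC_1|=O_h(\tau\log(1/\tau)n)$, as required.

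The main obstacle I expect is exactly the $h\ge3$ analysis of the one-step lemma: making the link recursion close quantitatively --- the telescoping over $j=2,\dots,h$ that pins down the exact shape of $\delta(H,\tau)$, together with the simultaneous control that keeps the fingerprint $O_h(\tau n)$ rather than letting it blow up geometrically in $h$. The case $h=2$ is by comparison transparent, since there ``delete the link of $v$'' just means ``delete $N(v)$'', there is no recursion, and the argument reduces to a Kleitman--Winston-style degree count. A lesser but genuine difficulty is the bookkeeping of the iteration: tracking how $\delta(H,\tau)$ deteriorates under passage to subcontainers, and coordinating the threshold choices with the number $t$ of rounds so that the edge count reaches $\eps\,e(H)$ just as the bound on $|\cC|$ is still inside~\ref{thm:STc}.
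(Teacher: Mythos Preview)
The paper does not prove Theorem~\ref{thm:ST0} at all: it is quoted verbatim (in abridged form) from Saxton and Thomason~\cite{ST}*{Corollary~3.6} and used as a black box, so there is no ``paper's own proof'' to compare against. Your sketch is therefore not so much a blind reconstruction of anything in this paper as an attempt to reprove the cited result from scratch.

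That said, as an outline of the Saxton--Thomason argument your sketch is broadly faithful: the architecture of a one-step container lemma iterated $O_h(\log(1/\eps))$ times, the deterministic ``scythe'' that reads off a small fingerprint $S$ from $I$ by repeatedly committing the current maximum-degree vertex and pruning its link, the key observation that the output container depends only on $S$, and the identification of the recursion over links at levels $j=2,\dots,h$ as the source of the co-degree function $\delta(H,\tau)$ --- all of this matches the actual proof in~\cite{ST}. You have also correctly flagged the genuine technical difficulties (the telescoping over uniformity levels and the stability of the hypotheses under restriction to subcontainers). What remains is, of course, the hard part: turning this into an argument with the stated explicit constants would require reproducing a substantial portion of~\cite{ST}, which is well beyond what the present paper attempts or needs.
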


We will now tailor the above result to our application.
 The hypergraphs we consider have a very symmetric structure.
Given $k$ and $n$, let $H(n,k)$ be the hypergraph with vertex set~$\binom{[n]}2$, the edges of which
 correspond to all copies of $K_k$ in the $K_n$ with vertex set $[n]$.
Thus, $H(n,k)$ has $\binom{n}{2}$ vertices, $\binom nk$ edges, and is $\binom{k}{2}$-uniform and $\binom{n-2}{k-2}$-regular.

For $J\subseteq \binom{[n]}2$, the degree of $J$ in $H(n,k)$ is $d(J)=\binom{n-v_J}{k-v_J}$, where $v_J$ is the number of  vertices in $J$ treated as a graph on $[n]$ rather than a subset of vertices of $H(n,k)$. Thus, over all $J$ with $|J|=j$, $d(J)$ is maximized by the smallest possible value of $v_J$, that is, when $v_J=\l_j$, the smallest integer $\l$ such that $j\leq \binom{\l}{2}$\,.
Consequently, for every vertex $v$ of $H(n,k)$ (that is, an edge of $K_n$ on $[n]$) and for each $j=2,\dots,\binom k2$, we have
$$d_j(v)=\binom{n-\l_j}{k-\l_j}.$$
Clearly, $l_j\ge3$ for $j\ge2$, which will be used later.
Let
$$\delta(n,k,\tau):=\sum_{j=2}^{\binom{k}{2}}\frac{2^{k^4}k^{k-2}}{\tau^{j-1}n^{\l_j-2}}.$$
The co-degree function of $H(n,k)$ can be bounded by $\delta(n,k,\tau)$.
\begin{claim}\label{deltadelta}
$$\delta(H(n,k),\tau)\le\delta(n,k,\tau).$$
\end{claim}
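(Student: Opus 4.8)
The plan is to substitute the explicit formulas for the relevant quantities of $H(n,k)$ into the definition \eqref{defdel} of $\delta(H,\tau)$ and then bound each factor crudely. First I recall the structural facts collected just above the claim: $H=H(n,k)$ has $N=\binom n2$ vertices, $e(H)=\binom nk$ edges, uniformity $h=\binom k2$, is $\binom{n-2}{k-2}$-regular so its average degree is $d=\binom{n-2}{k-2}$, and for every vertex $v$ and every $j=2,\dots,\binom k2$ one has $d_j(v)=\binom{n-\l_j}{k-\l_j}$ where $\l_j$ is the least integer $\l$ with $j\le\binom\l2$. In particular $\sum_v d_j(v)=N\binom{n-\l_j}{k-\l_j}$, since the bound $d_j(v)=\binom{n-\l_j}{k-\l_j}$ is uniform over $v$. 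Plugging into \eqref{defdel} and cancelling, the term $j$ of $\delta(H,\tau)$ equals
\[
\frac{2^{\binom h2-1}}{2^{\binom{j-1}2}}\cdot\frac{1}{\tau^{j-1}}\cdot\frac{\binom{n-\l_j}{k-\l_j}}{\binom{n-2}{k-2}}\,.
\]

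Next I bound the three pieces of this expression. For the power of $2$: $\binom h2-1<\binom h2=\binom{\binom k2}{2}\le k^4/2$, hence $2^{\binom h2-1}\le 2^{k^4}$, and since $2^{\binom{j-1}2}\ge1$ we may simply drop that denominator. For the ratio of binomial coefficients: writing it out,
\[
\frac{\binom{n-\l_j}{k-\l_j}}{\binom{n-2}{k-2}}
=\frac{(n-\l_j)!\,(k-2)!\,(n-k)!}{(k-\l_j)!\,(n-k)!\,(n-2)!}
=\frac{(k-2)!/(k-\l_j)!}{(n-2)!/(n-\l_j)!}
=\frac{(k-2)(k-3)\cdots(k-\l_j+1)}{(n-2)(n-3)\cdots(n-\l_j+1)}\,,
\]
a ratio of two products of $\l_j-2$ consecutive integers. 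The numerator is at most $(k-2)^{\l_j-2}\le k^{k-2}$ (using $\l_j-2\le h-2\le k-2$ and $k-2<k$), and the denominator is at least $(n-\l_j+1)^{\l_j-2}$, which for the intended range of $n$ — much larger than $k$ — is at least, say, $(n/2)^{\l_j-2}\ge n^{\l_j-2}/2^{k}$, or one can be even cruder and absorb such constants into the already generous factor $2^{k^4}$. Thus the $j$-th term of $\delta(H,\tau)$ is at most $2^{k^4}k^{k-2}/(\tau^{j-1}n^{\l_j-2})$, and summing over $j=2,\dots,\binom k2$ gives exactly $\delta(n,k,\tau)$.

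The only mild subtlety — and the closest thing to an obstacle — is making the denominator estimate $(n-2)(n-3)\cdots(n-\l_j+1)\ge n^{\l_j-2}$ honest: literally this is false for small $n$, so one either invokes $n\ge k^{400k^4}R^{40k^2}$ (so $n$ dwarfs $\l_j\le h\le k^2$ and the lost lower-order terms are swallowed by the slack between $k^{k-2}$ and $2^{k^4}$), or one states the claim with the understanding that $n$ is as in \eqref{n}. Everything else is a direct substitution and monotonicity of binomial ratios, with no real combinatorial content beyond the identity $d_j(v)=\binom{n-\l_j}{k-\l_j}$ established prior to the claim. I would therefore organize the write-up as: (i) recall $N$, $d$, $e(H)$, $h$, and $d_j(v)$; (ii) display the cancelled form of the $j$-th term; (iii) bound $2^{\binom h2-1}\le 2^{k^4}$ and the binomial ratio by $k^{k-2}/n^{\l_j-2}$; (iv) sum over $j$.
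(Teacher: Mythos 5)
Your overall plan is exactly the paper's: plug the explicit $N=\binom n2$, $h=\binom k2$, $d=\binom{n-2}{k-2}$, $d_j(v)=\binom{n-\l_j}{k-\l_j}$ into~\eqref{defdel}, drop the $2^{\binom{j-1}2}$ in the denominator, bound $2^{\binom h2-1}\le 2^{k^4}$, and then bound the binomial ratio. Two points are worth fixing, though.

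First, your justification ``$\l_j-2\le h-2\le k-2$'' is wrong: $h=\binom k2>k$ already for $k\ge4$, so $h-2\le k-2$ fails. The bound you actually need is $\l_j\le k$, which holds because $j\le\binom k2$ and $\l_j$ is by definition the least $\l$ with $j\le\binom\l2$; this is recorded in the paper right before the claim and gives $(k-2)^{\l_j-2}\le k^{\l_j-2}\le k^{k-2}$. Second, the ``mild subtlety'' you flag about the denominator disappears entirely if you bound the ratio termwise instead of bounding numerator and denominator separately: writing
\[
\frac{\binom{n-\l_j}{k-\l_j}}{\binom{n-2}{k-2}}=\prod_{i=2}^{\l_j-1}\frac{k-i}{n-i}\le\left(\frac kn\right)^{\l_j-2},
\]
valid whenever $n\ge k$ (each factor $\tfrac{k-i}{n-i}\le\tfrac kn$ and $k-i\ge k-\l_j+1\ge1$), you get the clean bound $k^{\l_j-2}/n^{\l_j-2}\le k^{k-2}/n^{\l_j-2}$ with no appeal to~\eqref{n} and nothing to absorb. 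This is precisely the inequality the paper's proof invokes, and it makes the claim hold without the large-$n$ caveat you would otherwise need to attach.
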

\begin{proof} By the definition of $\delta(H,\tau)$ in (\ref{defdel}) with $h$ replaced by $\binom k2$, $n$ by $\binom n2$, $d$ by $\binom{n-2}{k-2}$,  $d_j(v)$ by $\binom{n-\l_j}{k-\l_j}$, and with $2^{\binom{j-1}2}$ dropped out from the denominator, we have
$$\delta(H(n,k),\tau)\le 2^{k^4}\sum_{j=2}^{\binom k2}\frac{\binom{n-\l_j}{k-\l_j}}{\tau^{j-1}\binom{n-2}{k-2}}.$$
Now, observe that $\frac{\binom{n-\l_j}{k-\l_j}}{\binom{n-2}{k-2}}\le(k/n)^{\l_j-2}$ and $\l_j\le k$.
\end{proof}

The most important property of hypergraph $H(n,k)$ is that a subset $S$ of the vertices of~$H$ corresponds to a graph $G$ with vertex set $[n]$ and edge set $S$, and $S$ is an independent set in $H(n,k)$ if and only if the corresponding graph $G$ is $K_k$-free.
We apply Theorem~\ref{thm:ST0} to~$H(n,k)$.

\begin{cor}\label{thm:ST}
    Let $k\ge3$, $n\geq 3$, and let $\epsilon$ and $\tau$ be two real numbers such that   $0<\eps<1/2$,
    \begin{equation}\label{eq:ST-cond}
        \tau \leq \left( k^2!\right)^{-2}
\qqand
\delta(n,k,\tau)\leq \frac{\eps}{k^2!}
        \,.
    \end{equation}
    Then there exists a collection $\cC$ of subgraphs of $K_n$ such that the following three properties hold.
    \begin{enumerate}[label=\alabel]
        \item\label{cor:STa}  For every $K_k$-free graph~$G\subseteq K_n$ there exists a graph $C\in{\cC}$ such that $G\subset C$.
        \item\label{cor:STb} For all $C\in\cC$,
             $C$ contains at most $\eps \binom{n}{k}$ copies of $K_k$.
	\item\label{cor:STc} $\ln|\cC|\le (2k^2)!\log(1/\eps)\tau\log(1/\tau)\binom n2.$
    \end{enumerate}
\end{cor}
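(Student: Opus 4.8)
The plan is to obtain Corollary~\ref{thm:ST} as a direct specialization of Theorem~\ref{thm:ST0} to the hypergraph $H = H(n,k)$, using Claim~\ref{deltadelta} to translate the co-degree hypothesis. First I would record the parameter dictionary: $H(n,k)$ has $\binom n2$ vertices, is $h$-uniform with $h=\binom k2$, and a subset $S\subseteq\binom{[n]}2$ is independent in $H(n,k)$ exactly when the graph on $[n]$ with edge set $S$ is $K_k$-free (this is the ``most important property'' already isolated above). So a collection $\cC$ of subsets of $V(H(n,k))$ is literally a collection of subgraphs of $K_n$, and the three conclusions \ref{cor:STa}--\ref{cor:STc} are just \ref{thm:STa}--\ref{thm:STc} reread through this correspondence, using $e(H(n,k))=\binom nk$ for part~\ref{cor:STb}.

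Next I would verify that the hypotheses of Corollary~\ref{thm:ST} imply those of Theorem~\ref{thm:ST0} with $h=\binom k2$. For the first hypothesis, Theorem~\ref{thm:ST0} needs $\tau\le 1/(144(h!)^2 h)$; since $h=\binom k2\le k^2$ we have $144(h!)^2 h\le 144 (k^2!)^2 k^2\le (k^2!)^2$ once $k$ is not too small (and the small cases $k=3$ are checked directly, or absorbed because $(k^2!)^2$ is enormous), so $\tau\le(k^2!)^{-2}$ suffices. For the co-degree hypothesis, Theorem~\ref{thm:ST0} needs $\delta(H(n,k),\tau)\le\eps/(12\,h!)$; by Claim~\ref{deltadelta} it is enough that $\delta(n,k,\tau)\le\eps/(12\,(k^2)!)$, and this follows from the assumed bound $\delta(n,k,\tau)\le\eps/(k^2!)$ since $12(k^2)!\ge k^2!$ is false in that direction --- so here one must be slightly careful: we actually need $\eps/(k^2!)\le\eps/(12 h!)$, i.e.\ $12 h!\le k^2!$, which holds because $h=\binom k2<k^2$ gives $h!\le (k^2-1)!$ and $12(k^2-1)!\le k^2!$ for $k^2\ge12$, again with the finitely many small $k$ checked by hand. (If one prefers to avoid case analysis, one simply replaces the constants in \eqref{eq:ST-cond} by the literal Saxton--Thomason constants; the statement as written already builds in enough slack via $k^2!$ versus $(2k^2)!$.)

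Finally I would derive the quantitative bound \ref{cor:STc} from \ref{thm:STc}. Theorem~\ref{thm:ST0}\ref{thm:STc} gives $\ln|\cC|\le c\,\log(1/\eps)\,\tau\log(1/\tau)\,n_H$ with $c=800(h!)^3 h$ and $n_H=\binom n2$. Substituting $h=\binom k2\le k^2$ yields $c\le 800 (k^2!)^3 k^2$, and it remains to check $800(k^2!)^3 k^2\le (2k^2)!$, which is a comfortable inequality for all $k\ge3$ (the factorial $(2k^2)!$ dwarfs a fixed power of $k^2!$ times a polynomial, e.g.\ via $(2k^2)!\ge (k^2)!\,(k^2+1)(k^2+2)\cdots(2k^2)\ge (k^2!)\,(k^2)!^{?}$ — more simply, $(2k^2)!/(k^2!)^3$ is increasing and already exceeds $800k^2$ at $k=3$). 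This gives $\ln|\cC|\le (2k^2)!\,\log(1/\eps)\,\tau\log(1/\tau)\binom n2$, as claimed.

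The only real obstacle is bookkeeping with the various factorial constants: one must confirm that the ``clean'' constants $(k^2!)^{-2}$, $\eps/k^2!$, and $(2k^2)!$ appearing in Corollary~\ref{thm:ST} genuinely dominate (in the appropriate directions) the messier constants $1/(144(h!)^2h)$, $\eps/(12 h!)$, and $800(h!)^3h$ coming out of Theorem~\ref{thm:ST0} after the substitution $h=\binom k2$. All of these reduce to elementary inequalities between factorials and polynomials that hold for all $k\ge3$, with at worst a handful of small cases to inspect; none of it is deep. Everything else is a transcription of Theorem~\ref{thm:ST0} through the independent-set$/K_k$-free dictionary together with one invocation of Claim~\ref{deltadelta}.
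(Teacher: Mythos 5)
Your overall strategy is exactly the paper's: specialize Theorem~\ref{thm:ST0} to $H(n,k)$ with $h=\binom k2$, use Claim~\ref{deltadelta} to pass to $\delta(n,k,\tau)$, and transcribe the conclusions through the independence/$K_k$-free dictionary. That part is correct and is what the paper does.

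However, the constant bookkeeping is not merely ``a handful of small cases to inspect''; as written it contains inequalities that are simply false, for all $k$, because you replaced $h!=\binom k2!$ by the much larger quantity $k^2!$ in places where the argument crucially needs $h!$ to be small compared to $k^2!$. Concretely, the claimed bound $800(k^2!)^3k^2\le(2k^2)!$ fails: at $k=3$ the left side is roughly $3.4\times10^{20}$ while $(2k^2)!=18!\approx6.4\times10^{15}$. The parenthetical ``more simply'' remark that $(2k^2)!/(k^2!)^3$ ``is increasing and already exceeds $800k^2$ at $k=3$'' is also wrong on both counts: $18!/(9!)^3\approx0.13<1$, and the ratio eventually decreases in $k$ (each step multiplies it by roughly $4/k^2$). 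Likewise, the chain $144(h!)^2h\le144(k^2!)^2k^2\le(k^2!)^2$ is wrong at the last step, which would require $144k^2\le1$; loosening $h$ to $k^2$ gives an upper bound going the wrong way. You do get the co-degree comparison $12\,h!\le k^2!$ right after backtracking, and that one survives.

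The fix, which is what the paper does, is to keep $h=\binom k2$ and exploit the genuine slack $k^2-\binom k2=\binom{k+1}2\ge6$. Then $k^2!/\binom k2!$ is a product of at least $\binom{k+1}2$ consecutive integers, all at least~$4$, which immediately gives $k^2!>12\binom k2!$, hence $(k^2!)^2>144\bigl(\binom k2!\bigr)^2\binom k2$; and since $3\binom k2<2k^2$ (with a gap of $\binom{k+1}2$), one gets $(2k^2)!>800\bigl(\binom k2!\bigr)^3\binom k2$. With those three inequalities in place, the rest of your argument is correct and matches the paper.
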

\begin{proof} Note that for  $k\ge3$,
$$k^2!>12\tbinom k2!\quad\mbox{ and, consequently, }\quad (k^2!)^2>144\tbinom k2!\tbinom k2,$$ and that, by Claim \ref{deltadelta}, $\delta(H(n,k),\tau)\le\delta(n,k,\tau)$. Thus, the assumptions of Theorem \ref{thm:ST0} hold for $H:=H(n,k)$ with $h=\binom k2$, and its conclusions \ref{thm:STa}--\ref{thm:STc} translate into the corresponding properties~\ref{cor:STa}--\ref{cor:STc} of Corollary~\ref{thm:ST}. Finally, notice that
$$(2k^2)!>c=800\left(\tbinom k2!\right)^3\tbinom k2.$$
\end{proof}

In the next subsection we deduce Lemma~\ref{thm:RR1} from Corollary~\ref{thm:ST}. First, however,
we make a simple observation about the number of monochromatic copies of $K_k$ in every coloring of~$K_n$. Recall that  $R=R(k;r)$ is the $r$-color Ramsey number for $K_k$ and set
\begin{equation}\label{alfa}
    \alpha=\binom{R}{k}^{-1}.
\end{equation}
\begin{prop}\label{Ramsey}
Let $n\ge R$. For every $(r+1)$-coloring of the edges of $K_n$ either there are more than $\tfrac{\alpha}{2}\binom{n}{k}$  monochromatic copies
of $K_k$ colored by the first $r$ colors, or more than $\tfrac{1}{R^2}\binom{n}{2}$ edges receive  color $r+1$.
\end{prop}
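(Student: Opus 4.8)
The plan is to argue by contradiction and exploit the definition of the Ramsey number $R=R(k;r)$ on the sub-$K_n$'s spanned by $R$-element vertex sets. Suppose the conclusion fails, so that a fixed $(r+1)$-coloring of $E(K_n)$ produces at most $\tfrac{\alpha}{2}\binom nk$ monochromatic $K_k$'s in the first $r$ colors \emph{and} at most $\tfrac1{R^2}\binom n2$ edges in color $r+1$. Consider all $\binom nR$ subsets $W\subseteq[n]$ with $|W|=R$. For each such $W$, restrict the coloring to $K_n[W]$. If $K_n[W]$ happened to contain \emph{no} edge of color $r+1$, then the induced coloring would be an honest $r$-coloring of $K_R$, which by definition of $R(k;r)$ must contain a monochromatic $K_k$ in one of the first $r$ colors. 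So: every $R$-set $W$ either contains an edge of color $r+1$ or spans a monochromatic $K_k$ in colors $1,\dots,r$.

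The key step is then a double-counting dichotomy. Count the number of $R$-sets $W$ that contain at least one color-$(r+1)$ edge: each color-$(r+1)$ edge lies in $\binom{n-2}{R-2}$ such $R$-sets, so at most $\tfrac1{R^2}\binom n2\cdot\binom{n-2}{R-2}$ of the $R$-sets are ``bad'' in this sense. A short computation — using $\binom n2\binom{n-2}{R-2}=\binom nR\binom R2$ — shows this is at most $\tfrac1{R^2}\binom R2\binom nR=\tfrac{R-1}{2R}\binom nR<\tfrac12\binom nR$. Hence at least half of the $R$-sets, i.e.\ more than $\tfrac12\binom nR$ of them, must span a monochromatic $K_k$ in the first $r$ colors. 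Now count pairs $(W,Q)$ where $W$ is an $R$-set and $Q\subseteq W$ is a monochromatic $K_k$ in colors $1,\dots,r$: on one hand this count is more than $\tfrac12\binom nR$ (each good $W$ contributes at least one such $Q$); on the other hand, each monochromatic $K_k$ on $[n]$ sits inside exactly $\binom{n-k}{R-k}$ sets $W$, so the count is at most $M\binom{n-k}{R-k}$, where $M$ is the total number of monochromatic $K_k$'s in the first $r$ colors. Combining, $M\ge\tfrac12\binom nR/\binom{n-k}{R-k}=\tfrac12\binom nk/\binom Rk=\tfrac{\alpha}{2}\binom nk$, using $\binom nR\binom Rk=\binom nk\binom{n-k}{R-k}$ and \eqref{alfa}. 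But this contradicts the assumption $M\le\tfrac{\alpha}{2}\binom nk$ (the inequality should be made strict by noting that more than half the $R$-sets are good, giving $M>\tfrac{\alpha}{2}\binom nk$), and the proposition follows.

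The only mild obstacle is bookkeeping with the binomial identities and making sure the strict inequalities line up, but there is nothing deep here: the estimate $\tfrac1{R^2}\binom R2=\tfrac{R-1}{2R}<\tfrac12$ is exactly what makes the color-$(r+1)$ edges too few to ``cover'' all the $R$-sets, which is where the threshold $\tfrac1{R^2}\binom n2$ in the statement comes from. One should also note $n\ge R$ is used precisely so that $\binom{n-k}{R-k}$ and $\binom{n-2}{R-2}$ make sense and the counting of super-sets is valid.
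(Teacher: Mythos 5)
Your proof is correct and is essentially the same double-counting argument as the paper's, merely organized as a proof by contradiction rather than directly: the paper introduces $x$ as the fraction of $R$-sets with no color-$(r+1)$ edge, deduces $x\le\tfrac12$ from the assumed scarcity of monochromatic $K_k$'s, and concludes that many edges carry color $r+1$, whereas you assume both bounds fail and derive the contradiction. The key binomial identities $\binom n2\binom{n-2}{R-2}=\binom nR\binom R2$ and $\binom nR\binom Rk=\binom nk\binom{n-k}{R-k}$, and the inequality $\binom R2/R^2<\tfrac12$, are exactly the ones the paper uses.
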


\begin{proof} Consider an $(r+1)$-coloring of
the edges of $K_n$. Let $x\binom nR$ be the number of the $R$-element subsets of the vertices of
$K_n$ with no edge colored by color $r+1$. By the definition of $R$, each of these subsets induces in $K_n$
a monochromatic copy  of $K_k$. Thus, counting repetitions, there are at least
\[
    x\frac{\binom{n}{R}}{\binom{n-k}{R-k}}=x\frac{\binom{n}{k}}{\binom{R}{k}}
    =
  x\alpha\binom{n}{k}
\]
monochromatic copies of $K_k$  colored by one of the first $r$ colors.
Suppose that their number  is at most
\[
    \frac{\alpha}{2}\binom{n}{k}\,.
\]
Then $x\le \tfrac12$, that is, at least a half of the $R$-element subsets of $V(K_n)$  contain
at least one edge   colored by $r+1$. Hence, color $r+1$ appears on
at least
\[
        \frac{\frac12\binom{n}{R}}{\binom{n-2}{R-2}}=\frac{\frac12\binom{n}{2}}{\binom{R}{2}}>\frac{1}{R^2}\binom{n}{2}
\]
edges of $K_n$. This completes the proof.
\end{proof}

\subsection{Proof of Lemma~\ref{thm:RR1} -- details}
Let $r\geq 2$, $k\geq 3$, and let $n,b,C,$ and $p$ be as in Lemma~\ref{thm:RR1}, see (\ref{eq:RRconstants}) and (\ref{n}).
We have to show that
\[
    \PP(G(n,p)\rightarrow(K_k)_r)\geq 1-\exp(-bp\tbinom{n}{2})\,.
\]
First we set up a few auxiliary constants required for the application of Corollary~\ref{thm:ST}. Recalling that $\alpha$ is defined in (\ref{alfa}), let
\begin{equation}\label{eq:eps}
     \eps=\frac{\alpha}{2r},
\end{equation}
\begin{equation}\label{eq:ST-C0}
    C_0= 2^{4\sqrt{\log n}}R^{10/k} ,
    \quad\text{and}\quad
   \tau = C_0n^{-\frac{2}{k+1}}\,.
\end{equation}
We will now prove that the above defined constants $\eps$ and $\tau$ satisfy the assumptions of Corollary \ref{thm:ST}.

\begin{claim}\label{tau-delta}
Inequalities~\eqref{eq:ST-cond} hold true for every $k\ge3$.
\end{claim}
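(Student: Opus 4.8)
The plan is to verify the two inequalities of \eqref{eq:ST-cond} separately, each by a short reduction to the size bound \eqref{n}. For $\tau\le(k^2!)^{-2}$ I take binary logarithms: this is $\tfrac{2}{k+1}\log n\ge 4\sqrt{\log n}+\tfrac{10}{k}\log R+2\log(k^2!)$, and since $\log(k^2!)\le 2k^2\log k$ it suffices to prove $\tfrac{2}{k+1}\log n\ge 4\sqrt{\log n}+\tfrac{10}{k}\log R+4k^2\log k$. I would split $\tfrac{2}{k+1}\log n=\tfrac{1}{k+1}\log n+\tfrac{1}{k+1}\log n$ and use \eqref{n} in the form $\log n\ge 400k^4\log k+40k^2\log R$: the first copy dominates $\tfrac{10}{k}\log R+4k^2\log k$ term-by-term, while the second dominates $4\sqrt{\log n}$ because \eqref{n} gives $\log n\ge 16(k+1)^2$. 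This part is routine bookkeeping.

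For $\delta(n,k,\tau)\le\eps/(k^2!)$ I would write the $j$-th summand of $\delta(n,k,\tau)$ as
$$
\frac{2^{k^4}k^{k-2}}{\tau^{j-1}n^{\l_j-2}}=2^{k^4}k^{k-2}\,C_0^{-(j-1)}\,n^{E_j},\qquad E_j:=\frac{2(j-1)}{k+1}-(\l_j-2).
$$
The key structural fact is $E_j\le 0$ for $2\le j\le\binom k2$, with equality \emph{exactly} at $j=\binom k2$: since $j\le\binom{\l_j}2$ one gets $E_j\le(\l_j-2)\tfrac{\l_j-k}{k+1}$, and $\l_j\le k$ because $j\le\binom k2$; at $j=\binom k2$ this upper bound equals $0$ (here $\tfrac{2(\binom k2-1)}{k+1}=k-2$). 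Because $2r<R$ by \eqref{2r}, $\binom Rk\le R^k$, and $k^2!\le k^{2k^2}$, we have $\eps/(k^2!)\ge R^{-(k+1)}k^{-2k^2}$; as there are fewer than $k^2$ summands, it therefore suffices to bound each summand by $R^{-(k+1)}k^{-(2k^2+2)}$.

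I then split into two cases according to $\l_j$. If $\l_j\le k-1$ (so $k\ge4$, as always $\l_j\ge3$), then maximizing the parabola $(\l_j-2)(\l_j-k)$ over $\l_j\in[3,k-1]$ at its endpoints gives $E_j\le-\tfrac{k-3}{k+1}$, and with $C_0^{-(j-1)}\le1$ the summand is at most $2^{k^4}k^{k-2}n^{-(k-3)/(k+1)}$; inserting $n\ge k^{400k^4}R^{40k^2}$ and $\tfrac{k-3}{k+1}\ge\tfrac15$ rewrites this as $\big(2^{k^4}k^{\,2k^2+k-80k^4}R^{\,k+1-8k^2}\big)\cdot R^{-(k+1)}k^{-(2k^2+2)}$, and both $2^{k^4}k^{\,2k^2+k-80k^4}\le1$ and $R^{\,k+1-8k^2}\le1$. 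If $\l_j=k$, then on this block the summand equals $2^{k^4}k^{k-2}n^{-(k-2)}\tau^{-(j-1)}$, which, since $\tau<1$ by the first part, is increasing in $j$ and hence maximized at $j=\binom k2$, where $E_j=0$ and the summand is exactly $2^{k^4}k^{k-2}C_0^{-(\binom k2-1)}$. Here the $R$-exponent of $C_0^{\binom k2-1}$ is $\tfrac{10}{k}\big(\binom k2-1\big)=5k-5-\tfrac{10}{k}\ge k+1$ for $k\ge3$, so the summand is at most $2^{k^4}k^{k-2}\big/\big(2^{4\sqrt{\log n}(\binom k2-1)}R^{k+1}\big)$, and it remains to check $2^{k^4}k^{\,2k^2+k}\le 2^{4\sqrt{\log n}(\binom k2-1)}$, which follows from \eqref{n} with a large margin for $k\ge4$ (the right-hand exponent is at least $20k^4\sqrt{\log k}$) and by direct computation for $k=3$ (where $\binom k2-1=2$).

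The delicate point is precisely the term $j=\binom k2$ in the case $\l_j=k$: there the factor $n^{E_j}$ contributes nothing, so the entire weight must be carried by the power of $C_0$, and the choice $C_0=2^{4\sqrt{\log n}}R^{10/k}$ is calibrated exactly so that $C_0^{\binom k2-1}$ just absorbs the $2^{k^4}$ coming from $\delta(n,k,\tau)$ together with the $R^{k+1}k^{2k^2}$ coming from $1/\eps$ and $k^2!$. Verifying that this works uniformly down to $k=3$ — where $R^{10/k}$ alone is too weak and the $2^{4\sqrt{\log n}}$ factor must cover the $R$-loss — is the one place that needs genuine care; everything else is estimation against \eqref{n}.
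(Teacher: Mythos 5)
Your argument is correct and follows essentially the same route as the paper's proof: take logarithms and split against \eqref{n} for the first inequality, and for the second bound $\tau^{j-1}n^{\l_j-2}$ from below using $j\le\binom{\l_j}{2}$, distinguish $\l_j<k$ (where $n^{E_j}$ gives polynomial savings) from $\l_j=k$ (where the whole burden falls on $C_0^{\binom k2-1}$, forcing the $2^{4\sqrt{\log n}}$ factor in $C_0$). The only difference is bookkeeping — you bound each summand by $R^{-(k+1)}k^{-(2k^2+2)}$ and multiply by the count, while the paper uses the uniform lower bound $2^{16k^4}R^{2k}$ on $\tau^{j-1}n^{\l_j-2}$ — so the two derivations are interchangeable.
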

\begin{proof} In order to verify the first inequality in~\eqref{eq:ST-cond}, note that by the definitions
of $\tau$ and $C_0$ in \eqref{eq:ST-C0} and the obvious bound $x!<x^x$,

\begin{equation}\label{XXX}
(k^2!)^2\tau\le k^{4k^2}2^{4\sqrt{\log n}}R^{10/k}n^{-\frac{2}{k+1}}.
\end{equation}
It remains to show that the R-H-S of (\ref{XXX}) is smaller than one, or, by taking logarithms,
that
$$
4k^2\log k+4\sqrt{\log n}+\frac{10}k\log R<\frac2{k+1}\log n.
$$
 This, however, follows from
$$4\sqrt{\log n}<\frac1{k+1}\log n,$$
or equivalently,
$$16(k+1)^2<\log n,$$
and from
$$4k^2(k+1)\log k+\frac{10}k(k+1)\log R<\log n,$$
both of which are true by the lower bound on $n$ in \eqref{n}.

To prove the second inequality in~\eqref{eq:ST-cond}, note that
since $\tau\leq 1$ and $j\le\binom{l_j}2$, the quantity $\tau^{j-1}n^{\l_j-2}$ is minimized when
$j=\binom{\l_j}{2}$. Thus,  we have
\begin{equation}\label{eq:taucheck}
    \tau^{j-1}\cdot n^{\l_j-2}
    \geq\tau^{\binom{\l_j}{2}-1}\cdot n^{\l_j-2}
    =C_0^{\binom{\l_j}{2}-1}n^{-\frac{(\l_j-2)(\l_j+1)}{k+1}+\l_j-2}
    =C_0^{\binom{\l_j}{2}-1} n^\frac{(\l_j-2)(k-\l_j)}{k+1}\,.
\end{equation}
Recall that  for $j\geq 2$ we have $\l_j\geq 3$. In what follows we obtain a lower bound on the
R-H-S of~\eqref{eq:taucheck}
 by distinguishing two cases: $\l_j<k$ and $\l_j=k$.
 If $\l_j< k$, then $(\l_j-2)(k-\l_j)$ is minimized for $\l_j=3$ and $\l_j=k-1$
 and owing to $C_0>1$ we infer

\[
    \tau^{j-1}\cdot n^{\l_j-2}
    \overset{\eqref{eq:taucheck}}{\geq}
    C_0^{\binom{\l_j}{2}-1} n^\frac{(\l_j-2)(k-\l_j)}{k+1}
    >
    n^\frac{k-3}{k+1}
    \overset{\eqref{n}}{\geq}
    k^{80k^4}R^{8k^2}\,,
\]
where we also used the bound $\tfrac{k+1}{k-3}\le5$ for all $k\ge4$, which holds due to $3\leq
\l_j<k$. If, on the other hand, $\l_j=k$, then, by the definition of $C_0$ in \eqref{eq:ST-C0} and
the bound on $n$ in \eqref{n},
\begin{equation}\label{80}
C_0\ge2^{80k^2}R^{10/k}.
\end{equation}
Hence, in view of (\ref{80}), and the fact that $\tbinom k2-1\ge\tfrac1{5}k^2$ for $k\ge3$, we
have that
\[
    \tau^{j-1}\cdot n^{\l_j-2}
    \overset{\eqref{eq:taucheck}}{\geq}
    C_0^{\binom{k}{2}-1}{\ge}\left(2^{80k^2}R^{10/k}\right)^{k^2/5}=
    2^{16k^4}R^{2k}\,.
\]

Consequently, using the trivial bounds $k^{k}\cdot k^2!<2^{15k^4}$, $\tbinom Rk< R^k$, and
$R^k\overset{\eqref{2r}}{>}r$, we conclude that
\[
    \sum_{j=2}^{\binom{k}{2}}\frac{2^{k^4}k^{k-2}}{\tau^{j-1}n^{\l_j-2}}
    \leq
    \sum_{j=2}^{\binom{k}{2}}\frac{2^{k^4}k^{k-2}}{2^{16k^4}R^{2k}}
    \leq \frac{k^k}{2^{15k^4}R^{2k}}
    \leq
    \frac{1}{2r\binom Rk\cdot k^2!}
    \overset{\eqref{alfa},\eqref{eq:eps}}{=}
    \frac{\eps}{ k^2!}\,, 
\]
which concludes this proof.
\end{proof}

In view of Claim \ref{tau-delta},  the conclusions of Corollary \ref{thm:ST} hold true with $\eps$
and $\tau$ defined in, resp.,~\eqref{eq:eps} and~\eqref{eq:ST-C0}. That is, there exists a
collection $\cC$ of subgraphs of $K_n$ such that properties~\ref{cor:STa}--\ref{cor:STc}
of Corollary~\ref{thm:ST} are satisfied for these specific values of $\eps$ and~$\tau$.

\bigskip

To continue with the proof of Lemma~\ref{thm:RR1} consider a random graph $G(n,p)$
and let $\cE$ be the event  that  $G(n,p)\not\rightarrow (K_k)_r$.
 For each $G\in \cE$, there exists an $r$-coloring
$\phi\colond E(G)\to[r]$ yielding no monochromatic copy of $K_k$. (Further on we will call such a
coloring \emph{proper}.) In other words, there are $K_k$-free graphs $G_1,\dots,G_r$, defined by
$G_i=\phi^{-1}(i)$, such that $G_1\dcup\dots\dcup G_r=G$.  According to Property (a) of
Corollary~\ref{thm:ST}, for every $i\in[r]$ there exists a graph $C_i\in\cC$ such that
$G_i\subseteq C_i$.
        Consequently,
\[
G\cap\left(K_n\setminus \bigcup_{i=1}^rC_i\right)=\emptyset.
\]
 
Notice that there are only at most $|\cC|^r$ distinct graphs $K_n\setminus \bigcup_{i=1}^rC_i$.
 Moreover, we next show that all these graphs are  dense  (see Claim \ref{e(X)}).
Hence, as it is extremely unlikely for a random graph $G(n,p)$ to be completely disjoint from one of the few given 
dense graphs, it will ultimately follow that
 $\PP(\cE)=o(1)$.

 \begin{claim}\label{e(X)} For all $C_1,\dots, C_r\in\cC$,
 $$
|K_n\setminus \bigcup_{i=1}^rC_i|\ge \binom{n}{2}/R^2.$$
 \end{claim}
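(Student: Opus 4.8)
\textbf{Proof strategy for Claim \ref{e(X)}.}

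The plan is to apply Proposition \ref{Ramsey} with the $(r+1)$-coloring of $K_n$ obtained naturally from the graphs $C_1,\dots,C_r$, and then use property \ref{cor:STb} of Corollary \ref{thm:ST} to rule out the first alternative in that proposition. First I would set $C^*=\bigcup_{i=1}^r C_i$ and define a coloring $\psi$ of the edges of $K_n$: every edge not in $C^*$ receives color $r+1$, and every edge in $C^*$ receives some color $i\in[r]$ with the edge belonging to $C_i$ (breaking ties arbitrarily). Since $n\ge R$ by \eqref{n}, Proposition \ref{Ramsey} applies: either more than $\tfrac{\alpha}{2}\binom nk$ copies of $K_k$ are monochromatic in one of the first $r$ colors, or more than $\tfrac1{R^2}\binom n2$ edges get color $r+1$. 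The latter says precisely that $|K_n\setminus C^*|>\binom n2/R^2$, which is what we want.

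So it remains to exclude the first alternative. Here the key observation is that a monochromatic-in-color-$i$ copy of $K_k$ under $\psi$ is in particular a copy of $K_k$ all of whose edges lie in $C_i$, i.e.\ a copy of $K_k$ in $C_i$. By property \ref{cor:STb} of Corollary \ref{thm:ST}, each $C_i$ contains at most $\eps\binom nk$ copies of $K_k$, so the total number of monochromatic copies of $K_k$ in the first $r$ colors is at most $r\eps\binom nk$. By the choice $\eps=\alpha/(2r)$ in \eqref{eq:eps}, this is exactly $\tfrac{\alpha}{2}\binom nk$, so the first alternative of Proposition \ref{Ramsey} (which requires \emph{strictly more} than $\tfrac{\alpha}{2}\binom nk$) fails. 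Hence the second alternative must hold, giving the claim.

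I do not anticipate a serious obstacle; the only point requiring a little care is the bookkeeping of strict versus non-strict inequalities, namely that the bound $r\eps\binom nk = \tfrac{\alpha}{2}\binom nk$ matches the threshold in Proposition \ref{Ramsey} exactly, so it is essential that the proposition is phrased with a strict inequality in its first alternative (which it is). One should also note that Corollary \ref{thm:ST} is indeed applicable with the chosen $\eps$ and $\tau$ — but this was already verified in Claim \ref{tau-delta} — so the graphs $C_i$ genuinely enjoy property \ref{cor:STb}.
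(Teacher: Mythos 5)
Your proposal is correct and takes essentially the same approach as the paper: build an $(r+1)$-coloring of $K_n$ from the $C_i$ (with $K_n\setminus\bigcup_i C_i$ as the $(r+1)$-st color class), invoke Proposition \ref{Ramsey}, and rule out the first alternative using property \ref{cor:STb} and the choice $\eps=\alpha/(2r)$. Your attention to the strict/non-strict inequality and the applicability check of Corollary \ref{thm:ST} only makes the exposition slightly more careful than the paper's.
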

\begin{proof} The graphs $C_i$, $i\in[r]$, together with $K_n\setminus \bigcup_{i=1}^rC_i$, form an
$(r+1)$-coloring of $K_n$, more precisely, an $(r+1)$-coloring where, for each $i=1,\dots,r$, the
edges  of color $i$ are contained in $C_i$, while all edges of $K_n\setminus \bigcup_{i=1}^rC_i$
are colored with color $r+1$. (Note that this coloring may not be unique, as the graphs $C_i$ are
not necessarily mutually disjoint.)
By Proposition \ref{Ramsey}, this $(r+1)$-coloring yields either  more than
$(\alpha/2)\binom{n}{k}$ monochromatic
copies of~$K_k$ in  the first $r$ colors or more than $\binom{n}{2}/R^2$ edges in
the last color. Since for each $i\in[r]$, the $i$-th color class is contained in  $C_i$, it follows
from Property~(b) that there at most
$$r\cdot\eps\binom{n}{k} \overset{\eqref{eq:eps}}{=} \frac{\alpha}{2}\binom{n}{k}$$
monochromatic copies of~$K_k$ in the first $r$ colors. Consequently, we must have
\begin{equation}\label{eq:eX}
 |K_n\setminus \bigcup_{i=1}^rC_i| > \frac{1}{R^2}\binom{n}{2}\,,
\end{equation}
which concludes the proof.
\end{proof}

Based on  Claim \ref{e(X)} we can now bound $\PP(\cE)=\PP(G(n,p)\not\rightarrow(K_s)_r)$ from
above.

\begin{claim}\label{oszac}
$$\PP(G(n,p)\not\rightarrow(K_s)_r)\le|\cC|^r \exp\left\{-\frac{p\binom{n}{2}}{R^2}\right\}$$
\end{claim}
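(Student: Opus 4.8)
The plan is to turn the structural consequence of Claims~\ref{thm:ST} and~\ref{e(X)} into a union bound over the few ``container tuples''. Recall that for every graph $G\in\cE$ there exist $C_1,\dots,C_r\in\cC$ with $G\cap(K_n\setminus\bigcup_{i=1}^r C_i)=\emptyset$. First I would fix, once and for all, a choice function assigning to each $G\in\cE$ such a tuple $(C_1,\dots,C_r)\in\cC^r$, so that
\[
\cE\subseteq\bigcup_{(C_1,\dots,C_r)\in\cC^r}\left\{G(n,p)\colon G(n,p)\cap\Big(K_n\setminus\bigcup_{i=1}^r C_i\Big)=\emptyset\right\}.
\]
There are at most $|\cC|^r$ terms in this union (many tuples may give the same ``forbidden'' graph, but bounding by $|\cC|^r$ is all we need).

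Next I would bound the probability of each event. Fix a tuple and write $X=K_n\setminus\bigcup_{i=1}^r C_i$, a fixed graph on $[n]$. The event that $G(n,p)$ contains no edge of $X$ is exactly the event that all $|E(X)|$ edges of $X$ are absent from $G(n,p)$; since edges appear independently with probability $p$, this probability equals $(1-p)^{|E(X)|}\le\exp(-p|E(X)|)$. By Claim~\ref{e(X)} we have $|E(X)|\ge\binom n2/R^2$, hence each term is at most $\exp\big(-p\binom n2/R^2\big)$.

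Finally, applying the union bound over the at most $|\cC|^r$ tuples yields
\[
\PP(\cE)\le|\cC|^r\exp\left\{-\frac{p\binom n2}{R^2}\right\},
\]
which is the claimed inequality. The argument is essentially a one-line union bound, so there is no real obstacle here; the only point requiring a modicum of care is the bookkeeping that guarantees the union ranges over at most $|\cC|^r$ events — this is handled by the choice function above — together with the (routine) observation that ``$G(n,p)$ avoids all edges of a fixed graph $X$'' has probability $(1-p)^{|E(X)|}$ by independence of edges. The heavy lifting — that every $C_i$-tuple produces a dense complement $X$ — has already been done in Claim~\ref{e(X)} via Proposition~\ref{Ramsey}, so this claim merely packages those facts.
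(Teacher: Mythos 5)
Your proof is correct and follows essentially the same route as the paper: both bound $\PP(\cE)$ by a union over the at most $|\cC|^r$ container tuples, use Claim~\ref{e(X)} to lower-bound the number of missing edges, and apply independence together with $1-p\le e^{-p}$. The ``choice function'' phrasing is just a restatement of the paper's inclusion $\cE\subseteq\cF$ and changes nothing substantive.
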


\begin{proof}  Let $\cF$ be the event that $G(n,p)\cap\left(K_n\setminus
\bigcup_{i=1}^rC_i\right)=\emptyset$ for at least one $r$-tuple of graphs $C_i\in\cC$,
$i=1,\dots,r$.
 We have $\cE\subseteq\cF$. Indeed, if $G\in\cE$ then there is a proper coloring $\phi$ of $G$
 and graphs $C_1,\dots,C_r\in\cC$ such that $G\subseteq\bigcup_{i=1}^rC_i$ and, by Claim \ref{e(X)}, $K_n\setminus
\bigcup_{i=1}^rC_i$ has at least $\tfrac1{R^2}\binom n2$ edges and is disjoint from $G$.
 Thus, $G\in\cF$.
 Consequently,
 $$\PP(G(n,p)\not\rightarrow (K_k)_r)\le\PP(\cF).$$
 To estimate $\PP(\cF)$ we write $\cF=\bigcup\cF(C_1,\dots,C_r)$, where
 the summation runs over all collections $(C_1,\dots,C_r)$ with $C_i\in\cC$, $i=1,\dots,r$,
  and the event $\cF(C_1,\dots,C_r)$ means that $G(n,p)\cap \left(K_n\setminus
\bigcup_{i=1}^rC_i\right)=\emptyset$. Clearly,

$$\PP(\cF(C_1,\dots,C_r))
    =
    (1-p)^{|K_n\setminus
\bigcup_{i=1}^rC_i|}
    \leq
    (1-p)^{\binom{n}{2}/R^2}\,,$$
  where the last inequality follows by Claim \ref{e(X)}.
Finally, applying the union bound, we have
\begin{align*}
\PP(G(n,p)\not\rightarrow(K_s)_r)&\leq\PP(\cF)\leq|\cC|^r (1-p)^{\binom{n}{2}/R^2}\le|\cC|^r \exp\left(-\frac{p\binom{n}{2}}{R^2}\right)\,. 
\end{align*}
\end{proof}

  Observe that by property~\ref{cor:STc} of Corollary~\ref{thm:ST},

 \begin{equation}\label{Cr}
|\cC|^r\leq\exp\left\{r(2k^2)!\log(1/\eps)\tau\log(1/\tau)\binom n2\right\}.
\end{equation}

In view of Claim \ref{oszac} and inequality (\ref{Cr}), to complete the proof of Lemma
\ref{thm:RR1}, it suffices to show that
$$r(2k^2)!\log(1/\eps)\tau\log(1/\tau)\binom n2\le \frac{p\binom{n}{2}}{2R^2},$$
or, equivalently, after applying the definitions of $p$ and $\tau$ ((\ref{eq:RRconstants}) and (\ref{eq:ST-C0}), resp.) and dividing sidewise by $n^{-\tfrac2{k+1}}\binom n2$, that
\begin{equation}\label{toshow}
r(2k^2)!\log(1/\eps)C_0\log(1/\tau)\le C/(2R^2).
\end{equation}
To this end, observe that, since $C_0\ge1$ and, by (\ref{2r}),  $R>2r$, we have
$$\log(1/\tau)\overset{\eqref{eq:ST-C0}}{\le} \tfrac2{k+1}\log n\,$$
and
$$\log(1/\eps)\overset{\eqref{eq:eps}}{=}\log(2r\tbinom Rk)\le(k+1)\log R\;.$$
Hence, the L-H-S of (\ref{toshow}) can be upper bounded by
$2r(2k^2)!C_0\log R\log n.$
Consequently, using also the bounds $(2k^2)!<(2k)^{4k^2}$ and, again, $R>2r$, we realize that
(\ref{toshow}) will follow from

\begin{equation}\label{last}
2R^3\log R\cdot (2k)^{4k^2}\log n\le C/C_0.
\end{equation}
 On the other hand,
$$C/C_0\overset{\eqref{eq:RRconstants},\eqref{eq:ST-C0}}{=} 2^{5\sqrt{\log n\log k}-4\sqrt{\log n}}R^{16-10/k}
\ge2^{\sqrt{\log n\log k}+4\sqrt{\log n}(\sqrt{\log k}-1)}R^{12}.$$ Thus,  (\ref{last}) is an
immediate consequence of the following two inequalities, which are themselves easy consequences of
(\ref{n}):
$$2^{\sqrt{\log n\log k}}\overset{\eqref{n}}{\ge}2^{20k^2\log k}\ge(2k)^{4k^2}$$
and

$$2^{4\sqrt{\log n}(\sqrt{\log k}-1)}>2^{\sqrt{\log n}}\ge \log n.$$
For the latter inequality we first used $k\ge3$ and $\sqrt{\log3}>\tfrac54$, and then the fact that
$2^{\sqrt{x}}\ge x$ for all $x\ge 16$, which can be easily verified by checking the first
derivative (note that by (\ref{n}), $\log n\ge16$). This completes the proof of Lemma
\ref{thm:RR1}.

\begin{remark}\label{NS}\rm
The  idea of utilizing hypergraph containers for  Ramsey properties of random graphs comes from a
recent paper  by Nenadov and Steger \cite{steger} (see also \cite{FKBook}*{Chapter~7}) where the authors give a
short proof of the main theorem from \cite{rr} establishing an edge probability
threshold for the property $G(n,p)\to(F)_r$. Let us point to some similarities and differences
between their and our approach. For clarity of the comparison, let us restrict ourselves to the
case $F=K_k$ considered in our paper (the generalization to an arbitrary graph $F$ is quite
straightforward).

In \cite{steger} the goal is to prove an asymptotic result with $n\to\infty$ and all other
parameters fixed. Consequently, they do not optimize, or even specify constants. Our task is to
provide as good as possible upper bound on $n$ in terms of $k$ and $r$, so there is no asymptotics.

 The observation that a $K_k$-free coloring of the edges of $G(n,p)$  yields
  $r$ independent sets in the hypergraph $H(n,k)$, and therefore, by the Saxton-Thomason Theorem there are $r$ graphs $C_i$,
  $i=1,\dots,r$, each with only a few copies of $K_k$,
   whose union contains all the edges of $G(n,p)$, was made in \cite{steger}. Also there one
   can find a  statement similar to our Proposition~\ref{Ramsey} (cf.\ \cite{steger}*{Corollary 3}.) These
   two facts lead to similar estimates of the probability that $G(n,p)$ is not Ramsey. However, Nenadov and Steger,
   assuming that~$C$ is a constant, are forced to use Theorem 2.3 from \cite{ST}
    which involves the sequences of sets $T_i$. In our setting, we choose $C=C(n)$ in a
    balanced way, allowing us to go through with the estimates of $\PP(G(n,p)\not\rightarrow (K_k)_r)$
     without introducing the $T_i$'s, while, on the other hand, keeping the upper bound on $n$ exponential in $k$.
      In fact, as observed by Conlon and Gowers \cite{CG}, the  approach via random graphs cannot yield a better than
      double-exponential upper bound on $n$ if one assumes that $p$ is at the Ramsey threshold, i.e., if $C$ is
      a constant.
\end{remark}

\section{Relaxed Folkman numbers}\label{kl4}
In this section we prove Theorem \ref{kl}. We will need an elementary fact about Ramsey properties of quasi-random graphs.
 For constants $\rho$ and $d$  with $0<d,\rho\le 1$, we say that an
$n$-vertex  graph $\Gamma$ is \emph{$(\rho,d)$-dense} if every induced subgraph on  $m\ge \rho n$ vertices
contains at least $d(m^2/2)$ edges. It follows by an easy averaging argument that it suffices to
check the above inequality only for $m=\lceil \rho n\rceil$. Note also that every induced subgraph
of a $(\rho,d)$-dense $n$-vertex graph on at least $cn$ vertices is $(\tfrac{\rho}c,d)$-dense.
It turns out that for a suitable choice of the parameters, $(\rho,d)$-dense graphs are Ramsey.

\begin{prop}\label{rhodee}
For every integer $k\ge2$ and every $d\in(0,1)$ the following holds. If $n\ge (2/d)^{2k-4}$ and $0<\rho\le (d/2)^{2k-4}$, then  every two-colored $n$-vertex
$(\rho,d)$-dense graph $\Gamma$ contains a monochromatic copy of $K_k$.
\end{prop}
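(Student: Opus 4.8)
The plan is to induct on $k$. For $k=2$ the statement is trivial, since a $(\rho,d)$-dense graph on $n\ge(2/d)^0=1$ vertex with $\rho\le(d/2)^0=1$ has at least $d(n^2/2)\ge d/2>0$ edges as soon as $n\ge2$ (for $n=1$ there is nothing to color), hence a monochromatic $K_2$. For the inductive step, fix $k\ge3$, assume the statement for $k-1$, and let $\Gamma$ be an $n$-vertex $(\rho,d)$-dense graph with $n\ge(2/d)^{2k-4}$ and $\rho\le(d/2)^{2k-4}$, whose edges are colored red and blue. Since $\Gamma$ itself (taking $m=n\ge\rho n$) has at least $d(n^2/2)$ edges, the average degree is at least $dn$, so some vertex $v$ has degree at least $dn$ in $\Gamma$; by pigeonhole at least half of its incident edges, say the red ones, number at least $dn/2$. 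Let $A=N_{\text{red}}(v)$, so $|A|\ge dn/2$.

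The idea is now to apply the inductive hypothesis inside the induced subgraph $\Gamma[A]$, looking for a monochromatic $K_{k-1}$: if it is red, together with $v$ it gives a red $K_k$; if it is blue, we are already done. So it remains to check that $\Gamma[A]$ satisfies the hypotheses of the proposition for $k-1$, i.e.\ with the density parameter $d$ unchanged, with $|A|\ge(2/d)^{2(k-1)-4}=(2/d)^{2k-6}$, and with the appropriate quasi-randomness parameter $\rho'\le(d/2)^{2k-6}$. For the first, $|A|\ge dn/2\ge(d/2)(2/d)^{2k-4}=(2/d)^{2k-5}\ge(2/d)^{2k-6}$. For the quasi-randomness, recall the remark preceding the proposition: an induced subgraph of a $(\rho,d)$-dense graph on at least $cn$ vertices is $(\rho/c,d)$-dense. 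Here $A$ has at least $cn$ vertices with $c=d/2$, so $\Gamma[A]$ is $(2\rho/d,\,d)$-dense, and we need $2\rho/d\le(d/2)^{2k-6}$, i.e.\ $\rho\le(d/2)^{2k-5}$; this holds since $\rho\le(d/2)^{2k-4}\le(d/2)^{2k-5}$ as $d/2<1$. Thus the inductive hypothesis applies to $\Gamma[A]$ and we obtain the desired monochromatic $K_{k-1}$, completing the step.

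The only delicate point — and the one worth stating carefully — is the bookkeeping of the two exponents $2k-4$ as $k$ decreases: one must verify that passing to a neighborhood costs exactly one factor of $d/2$ in both the size bound and the $\rho$-bound, which is exactly matched by the drop from $2k-4$ to $2k-6$ (a change of $2$ in the exponent), leaving a comfortable slack factor of $(2/d)$ and $(d/2)$ respectively; the slack is what makes the induction go through cleanly rather than being tight. I would present the neighbourhood-selection and the two inequality checks as the body of the proof, with the base case dispatched in a sentence. No genuinely hard step is involved; the proposition is a standard "dense graphs are Ramsey" estimate and the content is entirely in tracking the constants so that they feed correctly into Theorem~\ref{kl}.
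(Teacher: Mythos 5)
Your inductive step has a genuine gap. After restricting to the red neighborhood $A$ of $v$ and applying the inductive hypothesis inside $\Gamma[A]$, you obtain a monochromatic $K_{k-1}$, and you argue: \emph{``if it is red, together with $v$ it gives a red $K_k$; if it is blue, we are already done.''} The second clause is false — a blue $K_{k-1}$ in $\Gamma[A]$ is not a blue $K_k$, and the vertex $v$ cannot be adjoined to it because the edges from $v$ to $A$ are red. So the single-parameter induction on $k$ does not close: the color returned by the inductive hypothesis must match the color of the star at $v$, and nothing in your argument forces that. (The correct single-parameter fix is the standard two-color Ramsey trick of iterating for roughly $2k$ steps rather than $k$ and invoking pigeonhole at the very end; alternatively one does a genuinely asymmetric induction in two parameters, as in $R(s,t)\le R(s-1,t)+R(s,t-1)$, carrying the ``red-target'' and ``blue-target'' sizes separately.)

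This is exactly what the paper does. It introduces \emph{canonical sequences} $(v_1,\dots,v_\ell)$, where for each $i$ all forward edges from $v_i$ have the same color, and proves by induction on $\ell$ (not on $k$) that a $(\rho,d)$-dense graph with $n\ge(2/d)^{\ell-2}$ and $\rho\le(d/2)^{\ell-2}$ contains a canonical sequence of length $\ell$. The inductive step is the same neighbourhood-restriction you use — pick a high-degree vertex, pass to its majority-color neighborhood, losing one factor of $d/2$ — but crucially the object being constructed (a canonical sequence) is color-agnostic, so no color mismatch can arise. Only at the end, with $\ell=2k-2$, does one apply pigeonhole: among $v_1,\dots,v_{2k-3}$, some $k-1$ share a forward-edge color, and together with $v_{2k-2}$ they form a monochromatic $K_k$. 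This also explains why the exponent in the statement is $2k-4$ rather than the $k-2$ your per-step accounting would suggest: the induction actually runs for $2k-2$ steps, not $k$. Your bookkeeping of the constants (the factor $d/2$ per neighbourhood restriction, the induced-subgraph density remark, the base case) is correct; the flaw is purely in the choice of induction statement, and the ``comfortable slack'' you noticed is an artifact of using the wrong induction variable.
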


\begin{proof}
 For a two-coloring of the edges of a graph $\Gamma$ we call a sequence of vertices
$(v_1,\dots,v_\l)$ \emph{canonical} if for each $i=1,\dots,\l-1$ all the edges $\{v_i,v_j\}$, for
$j>i$, are of the same color.

We will first show by induction on $\l$ that for every $\l\ge 2$ and $d\in(0,1)$, if $n\ge (2/d)^{\l-2}$ and $0<\rho\le (d/2)^{\l-2}$, then  every two-colored $n$-vertex
$(\rho,d)$-dense graph $\Gamma$ contains a
canonical sequence of length $\l$.

 For $\l=2$, every ordered pair of adjacent vertices is a canonical sequence. Assume that
the statement is true for some $\l\ge2$ and consider an $n$-vertex $(\rho,d)$-dense graph $\Gamma$, where $\rho\le (d/2)^{\l-1}$ and $n\ge (2/d)^{\l-1}$. As observed above, there is a vertex $u$ with degree at least $dn$. Let $M_u$ be a set of at least $dn/2$ neighbors of $u$ connected to $u$ by edges of the same color. Let $\Gamma_u=\Gamma[M_u]$ be the subgraph of $\Gamma$ induced  by the set $M_u$. Note that~$\Gamma_u$ has $n_u\ge dn/2\ge (2/d)^{\l-2}$ vertices and  is $(\rho_u,d)$-dense with $\rho_u\le (d/2)^{\l-2}$. Hence, by the induction assumption, there is a
canonical sequence of length $\l$ in $\Gamma_u$. This sequences preceded by the vertex~$u$ makes a canonical sequence of length $\l+1$ in~$\Gamma$.

To complete the proof of Proposition \ref{rhodee}, set $\l=2k-2$ above and observe that every canonical sequence $(v_1,\dots, v_{2k-2})$  contains a monochromatic  copy of $K_k$.
Indeed, among the vertices $v_1,\dots, v_{2k-3}$, some  $k-1$ have the same color on all the
``forward'' edges. These vertices together with vertex $v_{2k-2}$ form a monochromatic
copy of $K_k$. 
\end{proof}

\begin{proof}[Proof of Theorem~\ref{kl}]
 Let $n= 2^{4k/(1-4\alpha)}$. Consider a random graph
$G(n,p)$ where
$$p=2n^{-\frac{7+4\alpha}{16k}}=2^{-\frac{20\alpha+3}{4(1-4\alpha)}}.$$
By elementary estimates one can bound the expected number of $l$-cliques in $G(n,p)$ by
$$\left(\frac{en}lp^{\frac{l-1}2}\right)^l.$$
Thus, if $$\frac{l-1}2\ge\frac{\log n}{\log(1/p)}=\frac{16k}{20\alpha+3}$$ then, as $k\to\infty$, a.a.s. there are no $l$-cliques in $G(n,p)$.
By assumption,
$$\frac{l-1}2\ge\frac{k-\alpha}{2\alpha}\ge\frac{16k}{20\alpha+3},$$
where the last inequality, equivalent to $(3-12\alpha)k\ge 20\alpha^2+3\alpha$, holds if $k\ge \tfrac2{3(1-4\alpha)}$ (we used here the assumption that $\alpha<\tfrac14$).

Further, by a straightforward application of Chernoff's bound (see, e.g.,~\cite{JLR}*{ineq.~(2.6)}), a.a.s. $G(n,p)$ is $(\rho,p-o(p))$-dense, where $\rho=\tfrac{\log^2n}n$, say.
Indeed, setting $t=\rho n=\log^2n$, $\epsilon=\epsilon(n)=(\log n)^{-1/3}$, and $d=(1-\epsilon)p$, the probability that a fixed set $T$ of $t$ vertices spans in $G(n,p)$ fewer than $dt^2/2$ edges is at most
\begin{equation*}
\begin{split}
\PP(e(T)\le(1-\epsilon)pt^2/2)&\le \PP\left(e(T)\le(1-\epsilon/2)p\binom t2\right)\\&\le\exp\left( -\frac{\epsilon^2}8p\binom t2\right)\le\exp\left( -\frac{\epsilon^2}{24}pt^2\right).
\end{split}
\end{equation*}
Finally, note that the above bound, even multiplied by $\binom nt$, the number of all $t$-element subsets of vertices  in $G(n,p)$, still converges to zero
(recall that $p$ is a constant).

Using that $\epsilon k=O(\log^{2/3} n)$ one can easily verify that both assumptions of 
Proposition~\ref{rhodee}, that is, $n\ge(2/d)^{2k-4}$ and $\rho\le(d/2)^{2k-4}$, hold true. Indeed, dropping the
subtrahend $4$ for simplicity,
$$(d/2)^{2k}=(1-\epsilon)^{2k}n^{-1+\delta}\ge\rho\ge\frac1n,$$
for $n$ large enough, that is, for $k$ large enough.

In conclusion, a.a.s. $G(n,p)$ is such that
\begin{itemize}
\item it contains no $K_l$, and
\item  for every two-coloring of its edges, there is a monochromatic copy of $K_k$.
\end{itemize}
Hence, there exists an $n$-vertex graph with the above two properties and, consequently, $f(k,l)\le
n=  2^{4k/(1-4\alpha)}$. 
\end{proof}

\section{Hypergraph Folkman numbers}\label{disc} Hypergraph Folkman numbers are defined in an analogous way to
their graph counterparts. Given three integers $h$, $k$, and $r$, the $h$-uniform Folkman number
$f_h(k;r)$ is the minimum number of vertices in an $h$-uniform hypergraph $H$ such that
$H\rightarrow (K_k^{(h)})_r$ but $H\not\supset K_{k+1}^{(h)}$. Here $K_k^{(h)}$ stands for the
complete $h$-uniform hypergraph on $k$ vertices, that is, one with~$\binom kh$ edges. The
finiteness  of hypergraph Folkman numbers was proved by Ne\v set\v ril and R\"odl in \cite{NR}*{Colloary~6, page~206} and besides the gigantic upper bound stemming from their construction, no
reasonable bounds have been proven so far. Much better understood are the vertex-Folkman numbers
(where instead of edges, the vertices are colored), which for both, graphs and hypergraphs, are
bounded from above by an almost quadratic function of~$k$, while from below the bound is only
linear in $k$ (see \cites{dudekg,dudekh}).

The study of Ramsey properties of random hypergraphs began in \cite{RRhyp} where a threshold was
found for $K_4^{(3)}$, the 3-uniform clique on 4 vertices. Also there a general conjecture was
stated that a theorem analogous to that in \cite{rr} holds for hypergraphs too. This was confirmed
for $h$-partite $h$-uniform hypergraphs in \cite{rrs}, and, finally, for all $h$-uniform hypergraphs in
\cite{FRS} and, independently, in~\cite{CG_rs}.

As remarked by Nenadov and Steger in \cite{steger}, the Container theorem of Saxton-Thomason (or the Balogh-Morris-Samotij) also yields a simpler proof of the hypergraph Ramsey threshold theorem from
\cites{FRS,CG_rs}. We believe that, similarly, our quantitative approach should also provide an upper bound on the hypergraph Folkman numbers $f_h(k;r)$, exponential in a polynomial
 of $k$ and~$r$.

\begin{bibdiv}
\begin{biblist}

\bib{BMS}{article}{
   author={Balogh, J{\'o}zsef},
   author={Morris, Robert},
   author={Samotij, Wojciech},
   title={Independent sets in hypergraphs},
   journal={J. Amer. Math. Soc.},
   volume={28},
   date={2015},
   number={3},
   pages={669--709},
   issn={0894-0347},
   review={\MR{3327533}},
   doi={10.1090/S0894-0347-2014-00816-X},
}

\bib{CG}{article}{
   author={Conlon, D.},
   author={Gowers, W. T.},
   title={An upper bound for Folkman numbers},
   note={Preprint}, 
}

\bib{CG_rs}{article}{
   author={Conlon, D.},
   author={Gowers, W. T.},
   title={Combinatorial theorems in sparse random sets},
   note={Submitted}, 
   eprint={1011.4310}
}

\bib{dudekh}{article}{
   author={Dudek, Andrzej},
   author={Ramadurai, Reshma},
   title={Some remarks on vertex Folkman numbers for hypergraphs},
   journal={Discrete Math.},
   volume={312},
   date={2012},
   number={19},
   pages={2952--2957},
   issn={0012-365X},
   review={\MR{2946956}},
   doi={10.1016/j.disc.2012.06.014},
}

\bib{dudek}{article}{
   author={Dudek, Andrzej},
   author={R{\"o}dl, Vojt{\v{e}}ch},
   title={On the Folkman number $f(2,3,4)$},
   journal={Experiment. Math.},
   volume={17},
   date={2008},
   number={1},
   pages={63--67},
   issn={1058-6458},
   review={\MR{2410116 (2009a:05132)}},
}

\bib{dudekg}{article}{
   author={Dudek, Andrzej},
   author={R{\"o}dl, Vojt{\v{e}}ch},
   title={An almost quadratic bound on vertex Folkman numbers},
   journal={J. Combin. Theory Ser. B},
   volume={100},
   date={2010},
   number={2},
   pages={132--140},
   issn={0095-8956},
   review={\MR{2595697 (2011k:05074)}},
   doi={10.1016/j.jctb.2009.05.004},
}

\bib{E75}{article}{
   author={Erd{\H{o}}s, Paul},
   title={Problems and results on finite and infinite graphs},
   conference={
      title={Recent advances in graph theory},
      address={Proc. Second Czechoslovak Sympos., Prague},
      date={1974},
   },
   book={
      publisher={Academia, Prague},
   },
   date={1975},
   pages={183--192. (loose errata)},
   review={\MR{0389669 (52 \#10500)}},
}

\bib{EH}{article}{
   author={Erd{\H{o}}s, P.},
   author={Hajnal, A.},
   title={Research problems 2-5},
   journal={J. Combinatorial Theory},
   volume={2},
   date={1967},
   pages={104-105},
}

\bib{FRS}{article}{
   author={Friedgut, Ehud},
   author={R{\"o}dl, Vojt{\v{e}}ch},
   author={Schacht, Mathias},
   title={Ramsey properties of random discrete structures},
   journal={Random Structures Algorithms},
   volume={37},
   date={2010},
   number={4},
   pages={407--436},
   issn={1042-9832},
   review={\MR{2760356 (2012a:05274)}},
   doi={10.1002/rsa.20352},
}

\bib{Fo}{article}{
   author={Folkman, Jon},
   title={Graphs with monochromatic complete subgraphs in every edge
   coloring. },
   journal={SIAM J. Appl. Math.},
   volume={18},
   date={1970},
   pages={19--24},
   issn={0036-1399},
   review={\MR{0268080 (42 \#2979)}},
}

\bib{FKBook}{book}{
	author={Frieze, Alan},
   	author={Karo\'nski, Micha\l},
	title={Introduction to Random Graphs},
	publisher={Cambridge University Press, Cambridge},
   	date={2015},
}

\bib{Gr}{article}{
   author={Graham, R. L.},
   title={On edgewise $2$-colored graphs with monochromatic triangles and
   containing no complete hexagon},
   journal={J. Combinatorial Theory},
   volume={4},
   date={1968},
   pages={300},
   review={\MR{0219443 (36 \#2525)}},
}

\bib{JLR}{book}{
   author={Janson, Svante},
   author={{\L}uczak, Tomasz},
   author={Rucinski, Andrzej},
   title={Random graphs},
   series={Wiley-Interscience Series in Discrete Mathematics and
   Optimization},
   publisher={Wiley-Interscience, New York},
   date={2000},
   pages={xii+333},
   isbn={0-471-17541-2},
   review={\MR{1782847 (2001k:05180)}},
   doi={10.1002/9781118032718},
}

\bib{Lef87}{article}{
   author={Lefmann, H.},
   title={A note on Ramsey numbers},
   journal={Studia Sci. Math. Hungar.},
   volume={22},
   date={1987},
   number={1-4},
   pages={445--446},
   issn={0081-6906},
   review={\MR{932230 (89d:05132)}},
}

\bib{steger}{article}{
   author={Nenadov, Rajko},
   author={Steger, Angelika},
   title={A short proof of the random Ramsey theorem},
   journal={Combin. Probab. Comput.},
   volume={25},
   date={2016},
   number={1},
   pages={130--144},
   doi={10.1017/S0963548314000832},
}

\bib{Ne}{article}{
   author={Nenov, N. D.},
   title={An example of a $15$-vertex $(3,\,3)$-Ramsey graph with clique
   number $4$},
   language={Russian},
   journal={C. R. Acad. Bulgare Sci.},
   volume={34},
   date={1981},
   number={11},
   pages={1487--1489},
   issn={0366-8681},
   review={\MR{654433 (83f:05050)}},
}

\bib{NR}{article}{
   author={Ne{\v{s}}et{\v{r}}il, Jaroslav},
   author={R{\"o}dl, Vojt{\v{e}}ch},
   title={The Ramsey property for graphs with forbidden complete subgraphs},
   journal={J. Combinatorial Theory Ser. B},
   volume={20},
   date={1976},
   number={3},
   pages={243--249},
   review={\MR{0412004 (54 \#133)}},
}

\bib{PRU}{article}{
   author={Piwakowski, Konrad},
   author={Radziszowski, Stanis{\l}aw P.},
   author={Urba{\'n}ski, Sebastian},
   title={Computation of the Folkman number $F_e(3,3;5)$},
   journal={J. Graph Theory},
   volume={32},
   date={1999},
   number={1},
   pages={41--49},
   issn={0364-9024},
   review={\MR{1704160 (2000e:05121)}},
   doi={10.1002/(SICI)1097-0118(199909)32:1<41::AID-JGT4>3.3.CO;2-G},
}

\bib{RX}{article}{
   author={Radziszowski, Stanis{\l}aw P.},
   author={Xu, Xiaodong},
   title={On the most wanted Folkman graph},
   journal={Geombinatorics},
   volume={16},
   date={2007},
   number={4},
   pages={367--381},
   issn={1065-7371},
   review={\MR{2388276}},
}

\bib{rr}{article}{
   author={R{\"o}dl, Vojt{\v{e}}ch},
   author={Ruci{\'n}ski, Andrzej},
   title={Threshold functions for Ramsey properties},
   journal={J. Amer. Math. Soc.},
   volume={8},
   date={1995},
   number={4},
   pages={917--942},
   issn={0894-0347},
   review={\MR{1276825 (96h:05141)}},
   doi={10.2307/2152833},
}

\bib{RRhyp}{article}{
   author={R{\"o}dl, Vojt{\v{e}}ch},
   author={Ruci{\'n}ski, Andrzej},
   title={Ramsey properties of random hypergraphs},
   journal={J. Combin. Theory Ser. A},
   volume={81},
   date={1998},
   number={1},
   pages={1--33},
   issn={0097-3165},
   review={\MR{1492867 (98m:05175)}},
   doi={10.1006/jcta.1997.2785},
}

\bib{rrs}{article}{
   author={R{\"o}dl, Vojt{\v{e}}ch},
   author={Ruci{\'n}ski, Andrzej},
   author={Schacht, Mathias},
   title={Ramsey properties of random $k$-partite, $k$-uniform hypergraphs},
   journal={SIAM J. Discrete Math.},
   volume={21},
   date={2007},
   number={2},
   pages={442--460},
   issn={0895-4801},
   review={\MR{2318677 (2008d:05103)}},
   doi={10.1137/060657492},
}

\bib{F_double}{article}{
   author={R{\"o}dl, Vojt{\v{e}}ch},
   author={Ruci{\'n}ski, Andrzej},
   author={Schacht, Mathias},
   title={Ramsey properties of random graphs and Folkman
numbers},
   note={Submitted}
}

\bib{ST}{article}{
   author={Saxton, David},
   author={Thomason, Andrew},
   title={Hypergraph containers},
   journal={Invent. Math.},
   volume={201},
   date={2015},
   number={3},
   pages={925--992},
   issn={0020-9910},
   review={\MR{3385638}},
   doi={10.1007/s00222-014-0562-8},
}

\bib{Sko33}{article}{
    author={Skolem, Th.},
    title={Ein kombinatorischer Satz mit Anwendung auf ein logisches Entscheidungsproblem},
    journal = {Fundam. Math.},
    ISSN = {0016-2736; 1730-6329/e},
    volume = {20},
    pages = {254--261},
    year = {1933},
    Language = {German},
}

\bib{Seba}{article}{
   author={Urba{\'n}ski, Sebastian},
   title={Remarks on $15$-vertex $(3,3)$-Ramsey graphs not containing $K_5$},
   journal={Discuss. Math. Graph Theory},
   volume={16},
   date={1996},
   number={2},
   pages={173--179},
   issn={1234-3099},
   review={\MR{1446355 (98c:05116)}},
   doi={10.7151/dmgt.1032},
}

\end{biblist}
\end{bibdiv}

\end{document}